\documentclass[a4paper,12pt]{amsart}

\usepackage[T1]{fontenc}
\usepackage{amsmath,amssymb,amsthm,mathtools,hyperref,geometry,cleveref,mathrsfs,xcolor,microtype,tabularx,array,multirow,float,orcidlink}
\usepackage{enumitem}

\allowdisplaybreaks

\definecolor{DarkBlue}{rgb}{0.1,0.1,0.55}
\definecolor{DarkRed}{rgb}{0.55,0.1,0.1}

\hypersetup{
  colorlinks=true,
  linkcolor=DarkBlue,
  citecolor=DarkRed,
  urlcolor=DarkBlue,
  pdftitle={Rigidity Theorems for Self-Shrinkers of the Mean Curvature Flow},
  pdfauthor={Fagui Li and Yuhang Zhao}
}

\makeatletter
\renewcommand{\@defaultbiblabelstyle}[1]{[#1]}
\makeatother

\numberwithin{equation}{section}

\newtheorem{theorem}{Theorem}[section]

\newtheorem{lemma}[theorem]{Lemma}
\newtheorem{corollary}[theorem]{Corollary}

\newtheorem*{spherechern}{Chern's conjecture for minimal hypersurfaces}
\newtheorem*{selfshrinkerchern}{Chern's conjecture for self-shrinkers}
\newtheorem*{cwyconjecture}{Cheng--Wei--Yano conjecture}
\newtheorem*{cwyproblem}{Cheng--Wei--Yano problem}
\newtheorem*{yauconjecture}{Yau's conjecture for self-shrinkers}
\newtheorem{problem}{Problem}[section]
\theoremstyle{definition}

\theoremstyle{remark}
\newtheorem{remark}{Remark}[section]

\newcommand{\R}{\mathbb{R}}
\newcommand{\Sn}{\mathbb{S}}
\newcommand{\Sphere}{\mathbb{S}}
\newcommand{\cL}{\mathcal L}

\newcommand{\inner}[2]{\left\langle #1,#2\right\rangle}

\title[Rigidity Theorems for Self-Shrinkers]{Rigidity Theorems for Self-Shrinkers of the Mean Curvature Flow}

\author[F. G. Li]{Fagui Li}
\address{Frontier Interdisciplinary Domain, Beijing Institute of Technology, Zhuhai, Guangdong 519088, P. R. China}
\email{lifagui@bitzh.edu.cn}

\author[Y. Zhao]{Yuhang Zhao${}^{*}$}
\address{School of Mathematics, Nanjing University, Nanjing 210093, P. R. China}
\email{yuhangzhao@smail.nju.edu.cn}

\subjclass[2020]{53E10, 53C42, 58J50}
\keywords{Self-shrinker, mean curvature flow, second fundamental form, drift Laplacian, pinching theorem}
\thanks{F. G. Li is partially supported by NSFC (No. 12271040 and 12501061),  the Guangdong Provincial Association for Science and Technology Youth Talent Support Program (No. SKXRC2026413) and the Research Start-up Funding of Beijing Institute of Technology (No. 5640011253301)}

\thanks{$^{*}$ Corresponding author.}
\begin{document}

\begin{abstract}
In this paper, we prove a spectral upper-pinching theorem for complete properly immersed
self-shrinking hypersurfaces. Our argument is inspired by the second author's recent work\cite{Zhao2025}.  If \(\lambda_\rho(\Sigma)\geq\lambda>0\) and
\(S=|A|^2<1+\lambda\), then \(\Sigma\) is either a hyperplane or a generalized
round cylinder.  In the properly embedded
case, the Ding--Xin and Brendle--Tsiamis weighted Poincar\'e estimate gives
\(\lambda_\rho(\Sigma)\geq1/2\).  Consequently, the pointwise upper pinching
\(S<3/2\) forces \(\Sigma\) to be a hyperplane or a generalized round
cylinder.  For embedded self-shrinking surfaces in \(\mathbb R^3\), we also
obtain the endpoint case \(S\leq3/2\).  These results remove the lower
pointwise pinching assumption in the corresponding embedded upper-pinching
range and improve the ranges in earlier work of Ding--Xin~\cite{DingXin2014}, Cheng--Wei~\cite{ChengWei2015}, and
Lei--Xu--Xu~\cite{LeiXuXu2020}. 
\end{abstract}

\maketitle

\section{Introduction}

Let \(X:\Sigma^n\to\R^{n+1}\) be an immersed two-sided hypersurface, and fix a
global unit normal \(N\).  Unless explicitly stated otherwise, all hypersurfaces in
this paper are connected and have no boundary.  We write \(d\mu\) for the induced
Riemannian measure on \(\Sigma\), and we use \(\inner{\cdot}{\cdot}\) and
\(|\cdot|\) for the Euclidean inner product and the induced norms.  The symbol
\(\overline\nabla\) denotes the Euclidean connection, while \(\nabla\),
\(\operatorname{div}_\Sigma\), and
\[
  \Delta=\operatorname{div}_\Sigma\nabla
\]
denote the intrinsic Levi-Civita connection, divergence, and Laplacian on
\(\Sigma\).  With this sign convention, \(-\Delta\) is nonnegative on compact
manifolds.  Commas denote covariant derivatives with respect to \(\nabla\), and
\(Y^T\) denotes the tangential projection of an ambient vector \(Y\) onto
\(T\Sigma\).

For a local orthonormal frame \(\{e_i\}_{i=1}^n\), we use
\[
  h_{ij}=\inner{\overline\nabla_{e_i}e_j}{N},
  \qquad H=\sum_{i=1}^n h_{ii}.
\]
Let \(A:T\Sigma\to T\Sigma\) be the corresponding shape operator,
\[
  A(e_i)=\sum_j h_{ij}e_j,
\]
and denote its eigenvalues by \(\kappa_1,\ldots,\kappa_n\) when needed.  We put
\[
  S=|A|^2=\sum_{i,j}h_{ij}^2,
  \qquad
  \rho=e^{-|X|^2/2},
\]
and define the drift Laplacian by
\[
  \cL u=\Delta u-\inner{X}{\nabla u}.
\]
The hypersurface \(\Sigma\) is called a self-shrinker if
\begin{equation}\label{eq:shrinker}
  H+\inner{X}{N}=0.
\end{equation}
In this normalization, the flat model is the hyperplane through the origin, for
which \(S\equiv0\).  The nonflat standard models are the generalized round
cylinders, namely hypersurfaces which, after an ambient orthogonal transformation
of \(\R^{n+1}\), have the form
\[
  \Sphere^k(\sqrt{k})\times\R^{n-k}\subset\R^{n+1},
  \qquad 1\leq k\leq n.
\]
Each nonflat standard model satisfies \(S\equiv1\).  When the flat case is also
allowed, we state the conclusion as ``a hyperplane through the origin or a
generalized round cylinder.''

For a properly embedded hypersurface \(\Sigma\subset\R^{n+1}\), we use the
following convention of Brendle--Tsiamis~\cite[Section~2]{BrendleTsiamis2024}:
\(\R^{n+1}\setminus\Sigma\) has two connected components, denoted by \(\Omega\)
and \(\widetilde\Omega\), and \(N\) is the outward-pointing unit normal to
\(\Omega\).  The signed mean curvature \(H\) is always computed with this choice
of normal.  Thus no separate two-sided assumption is needed in the properly
embedded statements below.  In the immersed case, two-sided still means that the
normal line bundle is trivial and a global unit normal has been fixed.

Self-shrinkers model Type~I singularities of the mean curvature flow, beginning
with the work of Huisken~\cite{Huisken1990,Huisken1993}, and they are central in
the entropy and generic-singularity theory of Colding--Minicozzi~\cite{ColdingMinicozzi2012Generic}.
A basic rigidity question asks when a pointwise upper bound for
\(S=|A|^2\) forces a self-shrinker to be one of the standard models.  The guiding
analogy is Chern's gap problem for minimal hypersurfaces in spheres.

We recall only the spherical model needed for motivation.  If
\(M^n\subset\Sphere^{n+1}(1)\) is a closed minimal hypersurface and
\(S_M=|A_M|^2\), then the scalar curvature of \(M\) is \(R_M=n(n-1)-S_M\).  Thus
Chern's constant-scalar-curvature problem can be phrased in terms of constant
\(S_M\).
\begin{spherechern} 
	Assume $M^n\subset \Sphere^{n+1}(1)$ is a compact minimal hypersurface.
	\begin{enumerate}[label=\textup{(\roman*)}]
		\item If $S_M$ is constant and $n\le S_M\le 2n$, then either $S_M= n$ or $S_M= 2n$.
		\item If $n\le S_M\le 2n$, then either $S_M\equiv n$ or $S_M\equiv 2n$.
	\end{enumerate}
\end{spherechern}
The first gap was proved by Simons~\cite{Simons1968} and
Chern--do Carmo--Kobayashi~\cite{ChernDoCarmoKobayashi1970}: if \(S_M\leq n\),
then \(M\) is either a great sphere or a Clifford hypersurface.  Beyond this
range, the second-gap problem was initiated by Peng--Terng~\cite{PengTerng1983MathAnn}.
We use the spherical theory only as motivation; some representative progress includes
Yang--Cheng~\cite{YangCheng1998}, Ding--Xin~\cite{DingXin2011}, Lei--Xu--Xu~\cite{LeiXuXu2017JFA},
Cheng--Wei--Yamashiro~\cite{ChengWeiYamashiro2021},
  Tang--Wei--Yan~\cite{TangWeiYan2020},
Tang--Yan~\cite{TangYan2023}, Ge--Tan--Yan--Zhang~\cite{GeTanYanZhang2025},
Ge--Liu--Luo--Yan~\cite{GeLiuLuoYan2026}, and He--Xu--Zhao~\cite{HeXuZhao2026}, etc.
The compact embedded version suggested by this analogy is recorded in
Problem~\ref{prob:spherical-embedded-pinching}.

Motivated by this picture, we consider the following Chern-type upper gap for
self-shrinkers.
\begin{selfshrinkerchern}
For each \(n\geq2\), there exists a constant \(c(n)>0\) with the following
property.  Let \(X:\Sigma^n\to\R^{n+1}\) be a complete properly immersed self-shrinking hypersurface satisfying \eqref{eq:shrinker}.  If
\[
  S=|A|^2<1+c(n)
  \quad\text{on }\Sigma,
\]
then \(\Sigma\) is one of the following: a hyperplane through the origin, a
generalized round cylinder \(\Sphere^k(\sqrt{k})\times\R^{n-k}\),
\(1\leq k\leq n\), or a product \(\Gamma\times\R^{n-1}\), where
\(\Gamma\subset\R^2\) is a non-round Abresch--Langer self-shrinking curve.  In
particular, if \(\Sigma\) is properly embedded, then only the hyperplane and the
generalized round cylinders can occur.
\end{selfshrinkerchern}

The first gap for self-shrinkers is classical.  Le--\v{S}e\v{s}um~\cite{LeSesum2011}
proved an early gap theorem, and Cao--Li~\cite{CaoLi2013} established the sharp
first gap: a complete self-shrinker with polynomial volume growth and \(S\leq1\)
is a hyperplane or a generalized round cylinder.  Ding--Xin~\cite{DingXin2014}
obtained Simons-type identities and several pointwise and integral rigidity
results, and Ding~\cite{Ding2018} later strengthened an integral rigidity theorem
for the second fundamental form.  Cheng--Peng~\cite{ChengPeng2015} used a
generalized maximum principle for the drift operator to remove polynomial volume
growth from related rigidity statements.  Lei--Xu--Xu~\cite{LeiXuXu2020} proved
cylindrical rigidity under the pinching condition \(0\leq S-1\leq1/18\).  In the
properly immersed noncompact category, a lower pinching condition is unavoidable
for a cylinder-only conclusion: products of non-round Abresch--Langer~\cite{AbreschLanger1986}
shrinking curves with Euclidean factors give noncylindrical examples with \(S\)
arbitrarily close to \(1\); see also Mantegazza~\cite{Mantegazza2011}.  Recently,
Zhao~\cite[Theorem~1.1]{Zhao2025} proved that the lower pointwise restriction can
be dropped in the compact arbitrary-codimension case: if
\(X:\Sigma^n\to\R^{n+p}\), \(n\geq2\), is a closed self-shrinker and
\(|\mathrm{II}|^2\leq 1+1/(10\pi(n+2))\), then \(\Sigma\) is the round sphere
\(\Sphere^n(\sqrt n)\).  Related higher-codimension rigidity has also been
obtained for Lagrangian self-shrinkers by Li--Wang~\cite{LiWang2017}, and for
self-shrinkers of \(r\)-mean curvature flows by Alencar--Bessa--Silva Neto~\cite{AlencarBessaSilvaNeto2025}.

The constant-\(S\) case is the closest self-shrinker analogue of Chern's original
constant-scalar-curvature problem.  Cheng--Wei--Yano~\cite{ChengWeiYano2023}
formulate the following classification conjecture.
\begin{cwyconjecture}[\cite{ChengWeiYano2023}]
Let \(X:M^n\to\R^{n+1}\) be a complete self-shrinker.  If the squared norm \(S\)
of the second fundamental form is constant, then \(X(M)\) is isometric to one of
the following standard models:
\begin{enumerate}[label=\textnormal{(\arabic*)},leftmargin=2em,itemsep=2pt,topsep=2pt]
\item the hyperplane \(\R^n\);
\item a round cylinder \(\Sphere^k(\sqrt{k})\times\R^{n-k}\), \(1\leq k\leq n-1\);
\item the round sphere \(\Sphere^n(\sqrt n)\).
\end{enumerate}
Equivalently, constant \(S\) should force \(S\equiv0\) or \(S\equiv1\).
\end{cwyconjecture}
Since the full constant-\(S\) classification is difficult in general dimension,
Cheng--Wei--Yano isolate the following second-gap problem.
\begin{cwyproblem}[\cite{ChengWeiYano2023}]
Let \(X:M^n\to\R^{n+1}\) be a complete self-shrinker.  If \(S\) is constant,
then either \(X(M)\) is one of the three standard models listed in the preceding
conjecture, or
$
  S\geq2.
$
Equivalently, there should be no nonstandard complete self-shrinker with constant
\(S\) in the open range \(1<S<2\).
\end{cwyproblem}
This problem is precisely the constant-\(S\) form of the Chern-type second gap for
self-shrinkers.  It is known in dimension two: Ding--Xin~\cite{DingXin2014}
proved it under properness, and Cheng--Ogata~\cite{ChengOgata2016} removed the
polynomial volume-growth assumption.  Cheng--Li--Wei~\cite{ChengLiWei2022}
proved the three-dimensional constant-\(S\) classification under the additional
assumption that \(f_4=\sum_i\kappa_i^4\) is constant.  Quantitative partial
results include the gap \(S<10/7\) of Cheng--Wei~\cite{ChengWei2015} and the
range \(S<1.83379\) of Cheng--Wei--Yano~\cite{ChengWeiYano2023} when both \(S\)
and \(f_3=\sum_i\kappa_i^3\) are constant.

The main analytic input in this paper is a weighted Poincar\'e inequality for the
drift Laplacian.  For a complete properly immersed self-shrinker, define
\[
  \lambda_\rho(\Sigma)=
  \inf_{\substack{f\not\equiv0,\; \int_\Sigma f\rho\,d\mu=0\\
  \int_\Sigma(f^2+|\nabla f|^2)\rho\,d\mu<\infty}}
  \frac{\int_\Sigma |\nabla f|^2\rho\,d\mu}
       {\int_\Sigma f^2\rho\,d\mu}.
\]
The Euclidean coordinate functions \(X_\alpha=\langle X,e_\alpha\rangle\) satisfy
\(\cL X_\alpha=-X_\alpha\).  Weighted integration by parts gives
\(\int_\Sigma X_\alpha\rho\,d\mu=0\), and at least one coordinate function is
nonconstant; hence \(\lambda_\rho(\Sigma)\leq1\).  Ding--Xin~\cite{DingXin2013}
proved \(\lambda_\rho\geq1/2\) in the compact embedded case, and
Brendle--Tsiamis~\cite{BrendleTsiamis2024} proved the same estimate for properly
embedded, possibly noncompact self-shrinkers.  In the normalization
\eqref{eq:shrinker}, their estimate reads
\[
  \int_\Sigma |\nabla f|^2\rho\,d\mu
  \geq \frac12\int_\Sigma f^2\rho\,d\mu
\]
for every zero-Gaussian-mean function with finite weighted \(H^1\) norm.  This is
the self-shrinker analogue of the Choi--Wang~\cite{ChoiWang1983}
first-eigenvalue estimate for embedded minimal hypersurfaces in spheres.  In our
earlier work~\cite{LiZhao2026}, this estimate was applied to normal coordinate
functions to obtain Gaussian \(L^2\)-curvature lower bounds.  Here the decisive
test function is instead built from the positive and negative parts of the mean
curvature.

The following theorem is the spectral core of the paper.  It proves the
Chern-type gap with the explicit pinching constant \(c=\lambda\) whenever a
weighted Poincar\'e lower bound \(\lambda_\rho(\Sigma)\geq\lambda\) is available.
\begin{theorem}\label{thm:spectral-pinching}
Let \(X:\Sigma^n\to\R^{n+1}\) be a complete properly immersed self-shrinker
satisfying \eqref{eq:shrinker}.  Suppose that \(\lambda_\rho(\Sigma)\geq\lambda>0\)
and
\[
  S<1+\lambda
  \quad\text{on }\Sigma .
\]
Then 
\(\Sigma\) is one of the following:
\begin{enumerate}[label=\textnormal{(\alph*)},leftmargin=2em,itemsep=2pt,topsep=2pt]
\item a hyperplane through the origin;
\item a generalized round cylinder
\[
  \Sphere^k(\sqrt{k})\times\R^{n-k}
\]
for some \(1\leq k\leq n\).
\end{enumerate}
\end{theorem}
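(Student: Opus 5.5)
The plan is to run a Rayleigh-quotient argument with a test function made from the positive and negative parts of the mean curvature, as announced in the introduction. The first step is to record the Simons-type identities for \eqref{eq:shrinker} in the present normalization (Ding--Xin~\cite{DingXin2014}, Colding--Minicozzi~\cite{ColdingMinicozzi2012Generic}). These are
\[
  \cL h_{ij}=(1-S)h_{ij},\qquad \cL H=(1-S)H .
\]
I will re-derive the constant in front of \(H\) from \(\cL X_\alpha=-X_\alpha\), since the whole argument hinges on it. Properness gives Euclidean volume growth (Ding--Xin/Cheng--Zhou), and \(S<1+\lambda\le 2\) bounds \(|A|\). These two facts make every weighted integral below finite: \(H\) is controlled by \(|X|\), and \(\nabla H\) is controlled by \(|X||A|\).

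The next step is to integrate by parts against cutoffs. Since \(H^\pm=\max\{\pm H,0\}\) are Lipschitz, Kato-type integration gives
\[
  \int_\Sigma|\nabla H^\pm|^2\rho\,d\mu=\int_\Sigma(S-1)(H^\pm)^2\rho\,d\mu .
\]
I then set \(f=aH^+-bH^-\), with \(a,b\ge0\) not both zero, chosen so that \(\int_\Sigma f\rho\,d\mu=0\). This is possible because both \(\int H^\pm\rho\) are nonnegative. Since \(H^+\) and \(H^-\) have disjoint supports,
\[
  \int_\Sigma|\nabla f|^2\rho\,d\mu=\int_\Sigma (S-1)f^2\rho\,d\mu<\lambda\int_\Sigma f^2\rho\,d\mu
\]
whenever \(f\not\equiv0\), where strictness comes from the pointwise bound \(S<1+\lambda\). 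This contradicts \(\lambda_\rho(\Sigma)\ge\lambda\). If \(H\) changed sign, both \(\int H^\pm\rho>0\), hence \(a,b>0\) and \(f\not\equiv0\), which is impossible. So after possibly reversing \(N\) we have \(H\ge0\). The strong maximum principle applied to \(\cL H+(S-1)H=0\) then gives either \(H\equiv0\) or \(H>0\).

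If \(H\equiv0\), then \(\inner{X}{N}\equiv0\), so \(\Sigma\) is a smooth complete cone. A cone that is smooth at the vertex, with bounded curvature, must be a hyperplane through the origin. If \(H>0\), I will invoke Huisken's classification (\(H\ge0\), bounded \(|A|\), polynomial volume growth), as refined by Colding--Minicozzi. It leaves \(\Sphere^k(\sqrt k)\times\R^{n-k}\) or \(\Gamma\times\R^{n-1}\) with \(\Gamma\) an Abresch--Langer curve.

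The main obstacle is excluding non-round \(\Gamma\times\R^{n-1}\), where \(H=\kappa>0\) does not change sign. My first attempt would be a second test function that does change sign on \(\Gamma\): the rotational Jacobi field \(u=\inner{JX}{N}\) (equivalently a multiple of \(\kappa_s\)). It is nonzero exactly because \(\Gamma\) is non-round, and I would feed \(u^\pm\) into the same positive/negative-part Rayleigh argument. For this I must pin down the eigen-identity \(\cL u=(c-S)u\) in this normalization and check that \(c\) is large enough for \(S<1+\lambda\) to produce a contradiction. If the constant is not good enough, the fallback is an Abresch--Langer-specific estimate: a non-round curve has \(\max\kappa^2\) exceeding \(1+\lambda\), which I would try to obtain from the first integral \(\kappa e^{-|X|^2/2}=\)const along \(\Gamma\).
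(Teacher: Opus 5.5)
Your fixed-sign step is the paper's Lemma~\ref{lem:H-fixed-sign}: test function built from $H_\pm$, the identities $\int|\nabla H_\pm|^2\rho\,d\mu=\int(S-1)H_\pm^2\rho\,d\mu$, and strictness from $S<1+\lambda$. You also correctly isolate the real issue: after Huisken's classification, the immersed products $\Gamma\times\R^{n-1}$ must still be excluded. That exclusion, however, is not achieved, and your first attempt cannot work.

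On $\Gamma\times\R^{n-1}$ the rotation field $u=\inner{JX}{N}$ is, up to sign, $\inner{X}{T}=k_\theta=k_s/k$ (not a multiple of $k_s$). On functions of $\theta$ one has $\cL=k^2\partial_\theta^2$, and differentiating $k_{\theta\theta}+k=1/k$ gives $\cL u=-(1+S)u$. So $c=-1$; this is the rescaled Colding--Minicozzi identity $L\inner{JX}{N}=0$. Testing $u^\pm$ then yields $\int|\nabla u^\pm|^2\rho\,d\mu=\int(1+S)(u^\pm)^2\rho\,d\mu$. Hence every zero-mean combination $\alpha u^+-\beta u^-$ has Rayleigh quotient at least $1\geq\lambda$, and there is no contradiction whatever $S$ is. The positive/negative-part mechanism turns $S<1+\lambda$ into a contradiction only for an identity $\cL u=(c-S)u$ with $c\geq1$. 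The function $H$ has $c=1$, but it has a sign on $\Gamma\times\R^{n-1}$.

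Your fallback is not yet an argument, and it cannot come from curve geometry alone. If you only use $\lambda\leq1$, you would need $\max\kappa^2\geq2$ for every non-round Abresch--Langer curve. This fails: curves with $p/q$ close to $1/\sqrt2$ (e.g.\ along the Pell pairs $q^2-2p^2=1$) bifurcate from the circle and have $\max\kappa^2$ arbitrarily close to $1$. What is needed, and what the paper's Lemma~\ref{lem:AL-spectral-obstruction} supplies, is a comparison of two quantities that both degenerate as $p\to\infty$.

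\begin{itemize}
\item \textbf{Upper bound on the spectrum:} $\lambda_\rho(\Gamma\times\R^d)=\lambda_\rho(\Gamma)<1/p^2$. This uses the one-dimensional reduction $\rho\,ds=e^{-E/2}k^{-2}\,d\theta$ (this is where your first integral enters), the test function $\sin(\theta/p)$, and $\int_0^{T_0}k^{-2}\,d\theta>T_0$.
\item \textbf{Lower bound on the curvature:} $b^2-1>2/(p^2+1)$. This uses Sturm comparison on the two nodal intervals of $k-1$, which gives $a\leq p^2/(q^2-p^2)$; the integrality $q^2-2p^2\geq1$; and the asymmetry $b-1>1-a$ of $t^2-2\log t$.
\end{itemize}

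Together these give $\max_\Sigma S>1+\lambda_\rho(\Sigma)\geq1+\lambda$, which is the missing exclusion.
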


Thus Theorem~\ref{thm:spectral-pinching} gives the following
unconditional upper-pinching result.
\begin{theorem}\label{thm:embedded-main}
Let \(\Sigma^n\subset\R^{n+1}\) be a complete properly embedded self-shrinker
satisfying \eqref{eq:shrinker}.  If
\[
  S<\frac32
  \quad\text{on }\Sigma,
\]
then \(\Sigma\) is either a hyperplane through the origin or a generalized round
cylinder
$
  \Sphere^k(\sqrt{k})\times\R^{n-k}
$
for some \(1\leq k\leq n\).
\end{theorem}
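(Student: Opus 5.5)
The plan is to derive Theorem~\ref{thm:embedded-main} directly from Theorem~\ref{thm:spectral-pinching} with \(\lambda=\tfrac12\), using the Brendle--Tsiamis weighted Poincar\'e inequality as the spectral input. First I will check that a complete properly embedded self-shrinker \(\Sigma^n\subset\R^{n+1}\) meets the standing hypotheses of Theorem~\ref{thm:spectral-pinching}. The inclusion map is a complete proper immersion. Two-sidedness is automatic: by the convention recalled from \cite[Section~2]{BrendleTsiamis2024}, \(\R^{n+1}\setminus\Sigma\) has two components \(\Omega,\widetilde\Omega\), and we take \(N\) to be the outward normal of \(\Omega\). With this \(N\), equation \eqref{eq:shrinker} is exactly the normalization used in the statement, so \(H\) and \(S\) have the meaning the spectral theorem requires.

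Next I will verify that \(\lambda_\rho(\Sigma)\ge \tfrac12\). This is the Ding--Xin estimate \cite{DingXin2013} in the compact case and the Brendle--Tsiamis estimate \cite{BrendleTsiamis2024} in the properly embedded, possibly noncompact case. In the normalization \eqref{eq:shrinker} it states that
\[
  \int_\Sigma |\nabla f|^2\rho\,d\mu\ \ge\ \frac12\int_\Sigma f^2\rho\,d\mu
\]
for every \(f\) with \(\int_\Sigma f\rho\,d\mu=0\) and \(\int_\Sigma (f^2+|\nabla f|^2)\rho\,d\mu<\infty\). This admissible class is exactly the one in the definition of \(\lambda_\rho(\Sigma)\). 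Taking the infimum of the Rayleigh quotient over this class therefore gives \(\lambda_\rho(\Sigma)\ge\tfrac12\).

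The only point needing care is that the function class and the normalization of \(\rho=e^{-|X|^2/2}\) agree with those in \cite{BrendleTsiamis2024}. Their paper may use \(e^{-|x|^2/4}\) with \(H=\tfrac12\langle x,N\rangle\). In that case I will rescale by \(x\mapsto \sqrt2\,x\) and check that the Rayleigh quotient and the constant \(\tfrac12\) transform consistently. Proper embeddedness also gives Euclidean volume growth, so \(\int_\Sigma\rho\,d\mu<\infty\), and constants together with coordinate functions lie in the weighted \(H^1\) space; hence the infimum is over a nonempty class.

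With \(\lambda=\tfrac12\), the hypothesis \(S<\tfrac32=1+\lambda\) is exactly the pinching condition of Theorem~\ref{thm:spectral-pinching}. That theorem then yields that \(\Sigma\) is a hyperplane through the origin or some \(\Sphere^k(\sqrt k)\times\R^{n-k}\) with \(1\le k\le n\). I expect no genuine obstacle here; the only step requiring attention is the normalization bookkeeping in the second step.
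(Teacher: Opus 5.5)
Your proposal is correct and is exactly the paper's argument: Theorem~\ref{thm:eigenvalue} (Ding--Xin, Brendle--Tsiamis) gives \(\lambda_\rho(\Sigma)\geq 1/2\) for a properly embedded self-shrinker, and Theorem~\ref{thm:spectral-pinching} with \(\lambda=1/2\) then applies because \(S<3/2=1+\lambda\). Your extra checks on two-sidedness and on the rescaling between normalizations agree with the remarks the paper makes after Theorem~\ref{thm:eigenvalue}.
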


\begin{remark}\label{rem:comparison}
Theorem~\ref{thm:embedded-main} removes the lower pointwise pinching condition
from the complete embedded hypersurface setting in the range \(S<3/2\).  It
should be compared with the theorem of Lei--Xu--Xu~\cite{LeiXuXu2020}, which
assumes \(0\leq S-1\leq1/18\), and with the constant-\(S\) gap \(S<10/7\) of
Cheng--Wei~\cite{ChengWei2015}.  The price paid here is embeddedness: it is used
first to obtain the weighted Poincar\'e estimate and then to exclude the
Abresch--Langer alternatives from the mean-convex classification.
\end{remark}

In dimension two the endpoint can also be included.
\begin{theorem}\label{thm:surface-borderline}
Let \(\Sigma^2\subset\R^3\) be a complete properly embedded self-shrinker
satisfying \eqref{eq:shrinker}.  If
\[
  S\leq\frac32
  \quad\text{on }\Sigma,
\]
then \(\Sigma\) is either a hyperplane through the origin, the round cylinder
\(\Sphere^1(1)\times\R\), or the round sphere \(\Sphere^2(\sqrt2)\).
\end{theorem}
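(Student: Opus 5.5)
The plan is to run the argument of Theorem~\ref{thm:spectral-pinching} with \(\lambda=\tfrac12\), the value supplied by the Brendle--Tsiamis estimate, and to analyse the equality case that now becomes possible at \(S=\tfrac32\).

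\emph{Step 1: the test function.} With the normalization \eqref{eq:shrinker}, the mean curvature satisfies the Simons-type identity
\[
  \cL H + S H = H .
\]
Properness gives Euclidean volume growth (Ding--Xin), and \(S\le\tfrac32\) bounds \(H\) and, by standard estimates, \(\nabla H\). Hence \(H^\pm\) have finite weighted \(H^1\) norm, and weighted integration by parts is justified. Testing the identity against \(H^\pm\) gives
\[
  \int_\Sigma |\nabla H^\pm|^2\rho\,d\mu=\int_\Sigma (S-1)(H^\pm)^2\rho\,d\mu .
\]
If both \(H^+\) and \(H^-\) are nonzero, choose \(c>0\) so that \(f=H^+-cH^-\) has zero Gaussian mean. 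Then the Brendle--Tsiamis inequality (\(\lambda_\rho\ge\tfrac12\), using embeddedness) yields
\[
  \tfrac12\int_\Sigma f^2\rho\,d\mu \le \int_\Sigma|\nabla f|^2\rho\,d\mu=\int_\Sigma (S-1)f^2\rho\,d\mu\le \tfrac12\int_\Sigma f^2\rho\,d\mu .
\]
Both inequalities must therefore be equalities.

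\emph{Step 2: the equality case.} This is the main obstacle. Equality in the Poincaré inequality means that \(f\) minimizes the Rayleigh quotient, so \(f\) is a weak, hence smooth, eigenfunction with \(\cL f=-\tfrac12 f\). Equality in the second inequality means \(S=\tfrac32\) on \(\{f\neq0\}\). By unique continuation for \(\cL\), the set \(\{f\neq0\}\) is dense, so \(S\equiv\tfrac32\) on \(\Sigma\). Then \(\Sigma\) is a complete self-shrinking surface with constant \(S=\tfrac32\). The two-dimensional constant-\(S\) classification of Ding--Xin, or of Cheng--Ogata without a volume-growth assumption, forces \(S\in\{0,1\}\), which is a contradiction. (If unique continuation is awkward to invoke directly, I would instead note that \(f\) is a \(\tfrac12\)-eigenfunction and that on the open set where \(f\neq0\) the function \(H\) has constant sign and satisfies \(\cL H=(1-S)H=-\tfrac12H\). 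Continuity of \(S\) together with analyticity of self-shrinkers then extends \(S\equiv\tfrac32\) to all of \(\Sigma\).)

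\emph{Step 3: conclusion.} The previous steps show that \(H\) does not change sign. After possibly reversing \(N\) (the properly embedded convention allows either orientation by swapping \(\Omega\) and \(\widetilde\Omega\)), we have \(H\ge0\). If \(H\equiv0\), then \(\inner{X}{N}\equiv0\), so \(\Sigma\) is a smooth cone and hence a plane through the origin. Otherwise the strong maximum principle applied to \(\cL H+(S-1)H=0\) gives \(H>0\). The Huisken and Colding--Minicozzi classification of mean-convex self-shrinkers with polynomial volume growth then yields \(\Sphere^2(\sqrt2)\), \(\Sphere^1(1)\times\R\), or \(\Gamma\times\R\) with \(\Gamma\) an Abresch--Langer curve. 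Embeddedness excludes the non-round Abresch--Langer curves, which leaves exactly the three models in the statement.
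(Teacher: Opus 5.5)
Your proposal is correct and follows the paper's proof essentially step for step. It uses the same zero-mean test function built from $H_+$ and $H_-$, and the same squeezed chain of inequalities forcing $S=\tfrac32$ on $\{H\neq0\}$; it then extends this to $S\equiv\tfrac32$, contradicts the Ding--Xin/Cheng--Ogata constant-$S$ classification, and finishes with the mean-convex classification, with embeddedness excluding the Abresch--Langer products. The only difference is cosmetic: you get $S\equiv\tfrac32$ from unique continuation for the $\tfrac12$-eigenfunction $f$ (or from analyticity of $S$), whereas the paper applies nodal-set theory directly to $\cL H=(1-S)H$.
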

In particular, if Yau's conjecture for self-shrinkers holds, then the spectral theorem of this
paper gives the expected sharp second-gap range.
\begin{corollary}\label{cor:yau-conditional}
Assume that the weighted spectral equality
$
  \lambda_\rho(\Sigma)=1
$
holds for every complete properly immersed self-shrinker in the class under consideration. If $S<2$, then 
\(\Sigma\) is either a hyperplane through the origin or a generalized round
cylinder
$
  \Sphere^k(\sqrt{k})\times\R^{n-k}
$
for some \(1\leq k\leq n\). In particular, when $S$ is constant, the Cheng--Wei--Yano
second-gap problem has an affirmative answer in this class.
\end{corollary}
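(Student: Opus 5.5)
This follows from Theorem~\ref{thm:spectral-pinching} by specializing the spectral constant; no new analysis is needed. Let \(\Sigma\) be a complete properly immersed self-shrinker in the class under consideration, normalized by \eqref{eq:shrinker}. By hypothesis \(\lambda_\rho(\Sigma)=1\), so we may take \(\lambda=1\). This is admissible because \(\lambda_\rho(\Sigma)\geq\lambda>0\) holds trivially. The pinching hypothesis \(S<2\) is then exactly \(S<1+\lambda\). Theorem~\ref{thm:spectral-pinching} therefore applies and gives that \(\Sigma\) is a hyperplane through the origin or a generalized round cylinder \(\Sphere^k(\sqrt{k})\times\R^{n-k}\) with \(1\leq k\leq n\). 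This is the first assertion.

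For the Cheng--Wei--Yano second-gap problem, suppose in addition that \(S\) is constant, and that \(\Sigma\) is not one of the three standard models. We must show \(S\geq2\). Argue by contradiction and assume \(S<2\). The first part then shows that \(\Sigma\) is a hyperplane, with \(S\equiv0\), or a generalized round cylinder, with \(S\equiv1\). The case \(k=n\) is the round sphere \(\Sphere^n(\sqrt n)\), and the cases \(1\leq k\leq n-1\) are the round cylinders of the conjecture. In either case \(\Sigma\) is one of the three standard models, which is the desired contradiction. Hence constant \(S\) forces either a standard model or \(S\geq2\); in particular no constant value of \(S\) in \((1,2)\) occurs.

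The only point needing care is a bookkeeping one rather than an analytic one. The Cheng--Wei--Yano problem is stated for complete self-shrinkers, whereas Theorem~\ref{thm:spectral-pinching}, and hence the corollary, is stated for properly immersed ones. So the affirmative answer is claimed only within the class where the spectral equality \(\lambda_\rho=1\) is assumed, and that class consists of properly immersed self-shrinkers. This properness is needed for \(\lambda_\rho(\Sigma)\) to be well defined, through finite Gaussian weighted volume, and for Theorem~\ref{thm:spectral-pinching} to apply. With that restriction made explicit, the argument is a direct specialization, and I expect no step to be a genuine obstacle.
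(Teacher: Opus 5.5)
Your proposal is correct and matches the paper's proof, which likewise just applies Theorem~\ref{thm:spectral-pinching} with \(\lambda=1\). Your added unpacking of the Cheng--Wei--Yano consequence and your remark that the conclusion holds only within the properly immersed class make explicit what the paper leaves implicit. In that unpacking, the hyperplane has \(S\equiv0\), and the cylinders (with the sphere as the case \(k=n\)) have \(S\equiv1\).
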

The paper is organized as follows.  Section~\ref{sec:preliminaries} records the
weighted Poincar\'e estimate, the volume-growth facts, the mean-convex
classification, the two-dimensional constant-\(S\) classification, and the drift
identities used later.  Section~\ref{sec:proof} proves the fixed-sign lemma and
then proves Theorems~\ref{thm:spectral-pinching}, \ref{thm:embedded-main}, and
\ref{thm:surface-borderline}.  Section~\ref{sec:concluding} explains the role of
embeddedness, records the sharp spectral conjecture and its consequences, and
lists related open problems.

\section{Preliminaries}\label{sec:preliminaries}

We collect several standard facts for self-shrinkers in the normalization used here.
We first recall the weighted Poincar\'e inequality.

\begin{theorem}[Ding--Xin~\cite{DingXin2013}; Brendle--Tsiamis~\cite{BrendleTsiamis2024}]\label{thm:eigenvalue}
Let \(\Sigma^n\subset\R^{n+1}\) be a complete properly embedded self-shrinker satisfying \eqref{eq:shrinker}.  If \(f\in H^1_{\mathrm{loc}}(\Sigma)\) satisfies
\[
  \int_\Sigma(f^2+|\nabla f|^2)\rho\,d\mu<\infty,
  \qquad
  \int_\Sigma f\rho\,d\mu=0,
\]
then
\[
  \int_\Sigma|\nabla f|^2\rho\,d\mu
  \geq
  \frac12\int_\Sigma f^2\rho\,d\mu.
\]
\end{theorem}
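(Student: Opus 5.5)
The plan is to follow the Choi--Wang strategy, adapted to Gaussian space as in Ding--Xin and Brendle--Tsiamis. Two facts drive it. First, the weighted Ricci curvature of \((\R^{n+1},\rho\,dx)\) is \(\operatorname{Ric}+\overline\nabla^2(|x|^2/2)=\mathrm{Id}\). Second, the weighted mean curvature \(H+\inner{X}{N}\) of \(\Sigma\) vanishes identically by \eqref{eq:shrinker}. Let \(\overline{\cL}=\overline\Delta-\inner{x}{\overline\nabla\,\cdot}\) be the ambient drift Laplacian, \(\nu=N\) the outer normal of \(\Omega\), and \(u_\nu\) the normal derivative. I will use the weighted Reilly formula on a domain \(\Omega\) with \(\partial\Omega=\Sigma\):
\begin{equation*}
  \int_\Omega\Big((\overline{\cL}u)^2-|\overline\nabla^2u|^2-|\overline\nabla u|^2\Big)\rho\,dx
  =\int_\Sigma\Big(2u_\nu\,\cL f+(H+\inner{X}{N})u_\nu^2+A(\nabla f,\nabla f)\Big)\rho\,d\mu,
\end{equation*}
where \(f=u|_\Sigma\). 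On a self-shrinker the middle boundary term drops out.

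I first treat an eigenfunction \(\cL f=-\mu f\) on \(\Sigma\) with \(\int_\Sigma f\rho\,d\mu=0\) and \(f\not\equiv 0\). Solve the weighted Dirichlet problem \(\overline{\cL}u=0\) in \(\Omega\) with \(u=f\) on \(\Sigma\). Integrating by parts gives \(\int_\Sigma u_\nu f\rho\,d\mu=\int_\Omega|\overline\nabla u|^2\rho\,dx\), so the boundary term \(2\int_\Sigma u_\nu\cL f\,\rho\) equals \(-2\mu\int_\Omega|\overline\nabla u|^2\rho\). Embeddedness lets me choose between \(\Omega\) and \(\widetilde\Omega\): flipping the normal flips the sign of \(A\), so at least one side satisfies \(\int_\Sigma A(\nabla f,\nabla f)\rho\ge0\). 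On that side, Reilly's formula yields
\[
  -\int_\Omega|\overline\nabla u|^2\rho \;\ge\; -2\mu\int_\Omega|\overline\nabla u|^2\rho+\int_\Omega|\overline\nabla^2u|^2\rho .
\]
Hence \((2\mu-1)\int_\Omega|\overline\nabla u|^2\rho\ge0\). If \(\overline\nabla u\equiv0\), then \(f\) is constant, and the zero Gaussian mean forces \(f\equiv0\), a contradiction. Therefore \(\mu\ge1/2\).

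To pass to a general \(f\), I need the infimum in the Rayleigh quotient to be attained by such an eigenfunction. Properness gives Euclidean volume growth, so \(\rho\,d\mu\) is finite. I would establish compactness of the embedding \(H^1(\rho)\hookrightarrow L^2(\rho)\) using the log-Sobolev/Gaussian concentration available on self-shrinkers; this makes the spectrum of \(-\cL\) discrete and makes the first nonzero eigenvalue the bottom of the mean-zero spectrum. If compactness is unavailable, I would instead work on the exhaustions \(\Sigma\cap B_R\) and \(\Omega\cap B_R\) with cutoffs and let \(R\to\infty\).

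The main obstacle is the noncompact analysis. On the unbounded domain \(\Omega\) one must solve \(\overline{\cL}u=0\) with \(u\in H^1(\rho)\), control \(\overline\nabla^2 u\) near infinity, and justify Reilly's formula, including the vanishing of boundary terms at infinity. I would handle this with Gaussian-weighted energy estimates, approximating by truncated domains \(\Omega\cap B_R\) with cutoff functions. The spherical part of \(\partial(\Omega\cap B_R)\) carries large positive weighted mean curvature \(\approx R\), which has a favorable sign. I expect that passing to the limit along \(R\to\infty\) works because \(\rho\) decays like \(e^{-R^2/2}\) while areas grow only polynomially.
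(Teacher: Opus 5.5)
The paper does not prove this statement itself. It quotes Ding--Xin for compact \(\Sigma\) and Brendle--Tsiamis for the properly embedded noncompact case, and only checks that the rescaling \(X=x/\sqrt2\) turns their constant \(1/4\) (weight \(e^{-|x|^2/4}\)) into \(1/2\). Your outline is the Choi--Wang scheme on which those results rest, and its formal core is right. For a smooth eigenfunction \(\cL f=-\mu f\), on a domain where every integration by parts is legitimate, \(\overline{\cL}u=0\) and \(\mathrm{Ric}+\overline\nabla^2(|x|^2/2)=\mathrm{Id}\) give \((2\mu-1)\int_\Omega|\overline\nabla u|^2\rho=\int_\Omega|\overline\nabla^2u|^2\rho+\int_\Sigma h(\nabla f,\nabla f)\rho\). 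Here \(h\) is the second fundamental form of \(\partial\Omega\), normalized to be nonnegative on convex domains. The reduction to eigenfunctions is also fine and needs no log-Sobolev inequality. Testing \(\cL(|X|^2/2)=n-|X|^2\) against \(f^2\) gives \(\int_\Sigma|X|^2f^2\rho\,d\mu\le 2n\int_\Sigma f^2\rho\,d\mu+4\int_\Sigma|\nabla f|^2\rho\,d\mu\). Together with Rellich on the compact sets \(\Sigma\cap\overline{B_R}\), this makes \(H^1(\rho)\hookrightarrow L^2(\rho)\) compact.

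The gap is everything that makes that computation legitimate on an unbounded \(\Omega\), which you only say you ``expect'' to work. This cannot be avoided even for compact \(\Sigma\): one of \(\Omega,\widetilde\Omega\) is unbounded, and you do not control which side has the good sign. Moreover, the one mechanism you offer rests on a sign error. In the paper's convention, where the shrinker equation reads \(H+\inner{X}{N}=0\), the last two boundary terms of your Reilly formula have the wrong sign; the correct integrand is \(2u_\nu\cL f-(H+\inner{X}{N})u_\nu^2-A(\nabla f,\nabla f)\). On \(\Sigma\) this is harmless. On \(\partial B_R\), however, the coefficient of \(u_\nu^2\) is \(n/R-R<0\), because the Gaussian area \(|\partial B_R|e^{-R^2/2}\) decreases for \(R>\sqrt n\). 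A one-dimensional check: for \(u=x\) on \((-R,R)\), \(\int_{-R}^R(x^2-1)e^{-x^2/2}\,dx=-2Re^{-R^2/2}\). So truncating at \(\Omega\cap B_R\) produces a spherical term of the unfavorable sign, together with \(2u_\nu\Delta_{\partial B_R}u\) and \(uu_\nu\) terms, and all of these must be shown to tend to zero.

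Comparing \(e^{-R^2/2}\) with polynomial area growth does nothing until you have bounds on \(u\) and its derivatives, and these are exactly what is missing:
(i) a solution \(u\in H^1(\Omega,\rho)\) with trace \(f\), which requires a finite-energy extension of \(f\) off \(\Sigma\) even though \(|A|\) may be unbounded and sheets of \(\Sigma\) may approach each other at infinity;
(ii) \(\overline\nabla^2u\in L^2(\Omega,\rho)\) up to \(\Sigma\), after which the coarea formula does give radii \(R_k\to\infty\) with \(R_k\int_{\Omega\cap\partial B_{R_k}}(u^2+|\overline\nabla u|^2+|\overline\nabla^2u|^2)\rho\to0\);
(iii) \(u_\nu\in L^2(\Sigma,\rho)\) and \(|A||\nabla f|^2\in L^1(\Sigma,\rho)\), without which choosing a side with \(\int_\Sigma A(\nabla f,\nabla f)\rho\ge0\) is not even well defined.
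Supplying (i)--(iii) for a properly embedded shrinker with no curvature assumption is precisely the noncompact content the paper delegates to Brendle--Tsiamis. Your proposal identifies it as the obstacle but does not overcome it.
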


The compact embedded case is due to Ding--Xin~\cite[Theorem~1.3]{DingXin2013}; the properly embedded noncompact case is due to Brendle--Tsiamis~\cite[Theorem~1.1]{BrendleTsiamis2024}.  No separate polynomial volume-growth assumption is needed under properness.  Ding--Xin~\cite{DingXin2013} showed that complete noncompact properly immersed self-shrinkers have Euclidean volume growth, and Cheng--Zhou~\cite{ChengZhou2013} proved the equivalence between properness of the immersion, polynomial volume growth, and finiteness of the Gaussian weighted volume for complete immersed self-shrinkers in Euclidean space.  Brendle--Tsiamis use the normalization \(H=\frac12\langle x,\nu\rangle\), the weight \(e^{-|x|^2/4}\), and the constant \(1/4\).  Rescaling by \(X=x/\sqrt2\), and reversing the normal if necessary, gives \eqref{eq:shrinker}; the weight becomes \(e^{-|X|^2/2}\), while the Dirichlet energy scales so that the Poincar\'e constant becomes \(1/2\).

\begin{theorem}[Abresch--Langer~\cite{AbreschLanger1986}; Huisken~\cite{Huisken1990,Huisken1993}; Colding--Minicozzi~\cite{ColdingMinicozzi2012Generic}]\label{thm:huisken-mean-convex}
Let \(\Sigma^n\subset\R^{n+1}\) be a complete properly immersed self-shrinker satisfying \eqref{eq:shrinker} and  having polynomial volume growth and with bounded second fundamental form.  If the scalar mean curvature has a fixed sign, then \(\Sigma\) is one of the following:
\begin{enumerate}[label=\textnormal{(\alph*)},leftmargin=2em,itemsep=2pt,topsep=2pt]
\item a hyperplane through the origin;
\item a generalized round cylinder
\[
  \Sphere^k(\sqrt{k})\times\R^{n-k}
\]
for some \(1\leq k\leq n\);
\item a product
\[
  \Gamma\times\R^{n-1},
\]
where \(\Gamma\subset\R^2\) is a non-round Abresch--Langer self-shrinking curve.
\end{enumerate}
If \(\Sigma\) is embedded, then the last alternative is excluded.
\end{theorem}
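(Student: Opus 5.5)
The plan is to follow the Huisken/Colding--Minicozzi route, with the drift operator \(\cL\) and the weight \(\rho\), and with polynomial volume growth and \(\sup S<\infty\) used to justify every weighted integration by parts. Replacing \(N\) by \(-N\) if necessary, assume \(H\geq0\).

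\textbf{Step 1: the identities.} From \eqref{eq:shrinker} one derives the Simons-type identities
\[
  \cL h_{ij}=(1-S)h_{ij},\qquad \cL H=(1-S)H .
\]
Since \(H\geq0\) satisfies a linear elliptic equation with bounded zeroth-order coefficient, the strong maximum principle gives two cases: either \(H\equiv0\) or \(H>0\) everywhere.

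\textbf{Step 2: the case \(H\equiv0\).} Then \(\inner{X}{N}\equiv0\), so the position vector is tangent and \(\Sigma\) is a cone with vertex at the origin. A smooth complete cone through the origin must be a hyperplane through the origin, which is alternative (a).

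\textbf{Step 3: the case \(H>0\).} Put \(w=\log H\). Then
\[
  \cL w = 1-S-|\nabla w|^2 .
\]
The first analytic point is the weighted bound \(\int_\Sigma|\nabla w|^2\phi^2\rho\,d\mu\leq C\) with \(C\) independent of the cutoff. To get it, multiply the identity by a cutoff \(\phi^2\) and integrate against \(\rho\). Absorbing the cross term gives
\[
  \int_\Sigma|\nabla w|^2\phi^2\rho\,d\mu\leq C\int_\Sigma\bigl(S+|\nabla\phi|^2\bigr)\rho\,d\mu ,
\]
and the right-hand side is finite by bounded \(S\) and polynomial volume growth.

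Next, with \(\phi\) a cutoff, compute
\[
  \cL\Bigl(\frac{h_{ij}}{H}\Bigr)=-2\Bigl\langle\nabla w,\nabla\frac{h_{ij}}{H}\Bigr\rangle ,
\]
in which the zeroth-order terms cancel. Hence \(\operatorname{div}\bigl(H^2\rho\,\nabla(h_{ij}/H)\bigr)=0\). Multiplying by \((h_{ij}/H)\phi^2\) and integrating by parts gives
\[
  \int_\Sigma H^2\Bigl|\nabla\frac{A}{H}\Bigr|^2\phi^2\rho\,d\mu
  \leq C\int_\Sigma S\,|\nabla\phi|^2\rho\,d\mu .
\]
Here \(H^2|A/H|^2=S\) is bounded. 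Letting \(\phi\to1\) with \(|\nabla\phi|\leq1/R\), the right-hand side tends to zero, so \(A/H\) is parallel. Taking norms, \(|A|/H\) is constant. This gives the Huisken identity
\[
  H^2|\nabla A|^2=|A|^2|\nabla H|^2 .
\]

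\textbf{Step 4: splitting.} If \(\nabla H\equiv0\), then \(\nabla A\equiv0\). Lawson's classification of hypersurfaces with parallel second fundamental form, combined with \eqref{eq:shrinker}, gives \(\Sphere^k(\sqrt k)\times\R^{n-k}\), which is alternative (b).

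Otherwise, on the open set where \(\nabla H\neq0\), the identities \(\nabla_k h_{ij}=h_{ij}\nabla_kH/H\) together with the Codazzi symmetry force \(A\) to have rank one. Because \(A/H\) is parallel, this holds globally. Thus the nullity distribution of \(A\) is parallel, and by de Rham \(\Sigma\) splits as \(\Gamma\times\R^{n-1}\). Here \(\Gamma\) is a complete properly immersed self-shrinking curve in \(\R^2\) with positive curvature. By Abresch--Langer, \(\Gamma\) is either the round circle, which lands in case (b) with \(k=1\), or a non-round Abresch--Langer curve, which is alternative (c).

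\textbf{Step 5: the embedded case.} Every non-round Abresch--Langer curve has rotation index at least two and therefore self-intersects. Consequently \(\Gamma\times\R^{n-1}\) is not embedded, and alternative (c) is excluded when \(\Sigma\) is embedded.

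\textbf{Main obstacle.} I expect the hardest part to be Step 3 on noncompact \(\Sigma\), where \(H\) may tend to zero at infinity. The quotient \(A/H\) need not be bounded, so the cutoff argument must be arranged so that only \(H^2|A/H|^2=S\) and \(|\nabla w|^2\) appear, each controlled by bounded \(S\) and polynomial volume growth. If this direct route fails, I would instead approximate \(\log H\) by \(\log(H+\varepsilon)\) and then let \(\varepsilon\to0\), as Colding--Minicozzi do.
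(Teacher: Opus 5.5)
The paper does not prove this statement. It is quoted from Abresch--Langer, Huisken and Colding--Minicozzi, followed only by a remark on which reference covers which case. Your proposal reconstructs the classical argument, and it is correct.

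Your rigidity step is cleaner than the classical one. You use the exact equation \(\operatorname{div}(H^2\rho\nabla(A/H))=0\) and test it against \((A/H)\phi^2\). This gives the Caccioppoli estimate \(\int_\Sigma H^2|\nabla(A/H)|^2\phi^2\rho\,d\mu\leq4\int_\Sigma S|\nabla\phi|^2\rho\,d\mu\), in which only \(H^2|A/H|^2=S\) appears. By contrast, integrating Huisken's scalar identity for \(|A|^2/H^2\) against \(\phi^2\) alone leaves a boundary term involving \(\nabla A\) and \(\nabla\log H\), which needs extra integrability on a noncompact \(\Sigma\). Since \(H>0\) makes everything smooth on \(\operatorname{spt}\phi\), the ``main obstacle'' you anticipate does not arise, and no \(\log(H+\varepsilon)\) regularization is needed.

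The citation buys the paper brevity and the Colding--Minicozzi unbounded-curvature case, which is never needed here since \(S<1+\lambda\) is assumed. Your route buys a self-contained proof that handles the immersed alternative (c) directly.

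Three details should be tightened.

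First, the \(|\nabla w|^2\) bound is never used, and its right-hand side is off. The zeroth-order term is \(1-S\leq1\), so the correct estimate is \(\int_\Sigma\phi^2(S+\tfrac12|\nabla w|^2)\rho\,d\mu\leq\int_\Sigma(\phi^2+2|\nabla\phi|^2)\rho\,d\mu\). Incidentally, this gives \(\int_\Sigma S\rho\,d\mu<\infty\) for free, so your Caccioppoli step would work even without bounded \(S\).

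Second, intrinsic de Rham only splits the flat universal cover; the extrinsic splitting comes from the Gauss--Weingarten formulas. The tensor \(A/H\) is parallel with eigenvalues \(1,0,\ldots,0\), so its unit eigenvector \(e_1\) satisfies \(\nabla e_1=0\). Hence \(\overline\nabla_Ye_1=\inner{AY}{e_1}N\) and \(\overline\nabla_YN=-AY\) both lie in \(\operatorname{span}\{e_1,N\}\), and this 2-plane \(P\) is constant. Therefore \(\Sigma\) is a cylinder over a curve in \(P\) with rulings \(P^\perp\), and the shrinker equation sees only the \(P\)-component of \(X\).

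Third, Abresch--Langer classify closed curves, so you should note that \(\Gamma\) is closed. It is bounded because \(|\Gamma|^2=E+2\log k\) with \(k\) periodic and positive, and a bounded properly immersed curve is closed.
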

For \(n=1\), this is the Abresch--Langer classification, with the embedded case reducing to the line and the round circle.  For \(n\geq2\), the compact and the bounded-curvature complete cases are due to Huisken, and Colding--Minicozzi removed the bounded-curvature assumption in the embedded noncompact case.  In the immersed category, the additional products over Abresch--Langer curves are precisely the alternatives that embeddedness rules out.

\begin{lemma}\label{lem:finite-H1}
Let \(X:\Sigma^n\to\R^{n+1}\) be a complete properly immersed self-shrinker satisfying \eqref{eq:shrinker}.  If \(S\) is bounded, then
\[
  0<\int_\Sigma\rho\,d\mu<\infty,
  \qquad
  \int_\Sigma(H^2+|\nabla H|^2)\rho\,d\mu<\infty.
\]
\end{lemma}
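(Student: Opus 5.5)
We need to show that the Gaussian weighted volume is finite and positive, and that \(H\) has finite weighted \(H^1\) norm. The plan is to reduce everything to polynomial volume growth, and then bound \(H^2\) and \(|\nabla H|^2\) by polynomials in \(|X|\) using only the boundedness of \(S\).

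First, the volume. Since \(\Sigma\) is complete and properly immersed, the Ding--Xin and Cheng--Zhou results recalled after Theorem~\ref{thm:eigenvalue} give Euclidean, hence polynomial, volume growth:
\[
  \mu\bigl(\Sigma\cap B_r(0)\bigr)\leq Cr^{n}\qquad\text{for } r\geq1 .
\]
Equivalently, Cheng--Zhou give directly that \(\int_\Sigma\rho\,d\mu<\infty\). To make this explicit, decompose \(\Sigma\) into the annuli \(\{k\le|X|<k+1\}\). On the \(k\)-th annulus we have \(\rho\le e^{-k^2/2}\), so
\[
  \int_\Sigma \rho\,d\mu\le\sum_{k} C(k+1)^n e^{-k^2/2}<\infty .
\]
Positivity is immediate: \(\rho>0\) everywhere and \(\Sigma\) is a nonempty manifold, so its measure is positive.

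Next, the mean curvature. We have \(H^2\le nS\), which is bounded, so \(\int_\Sigma H^2\rho\,d\mu\le n\sup S\int_\Sigma\rho\,d\mu<\infty\). For the gradient, differentiate \eqref{eq:shrinker} along the surface. For a tangent vector \(e_i\), using \(\overline\nabla_{e_i}X=e_i\perp N\) and the Weingarten equation,
\[
  H_{,i}=-e_i\inner{X}{N}=-\inner{X}{\overline\nabla_{e_i}N}=\sum_j h_{ij}\inner{X}{e_j}.
\]
Thus \(\nabla H=A(X^T)\), the standard identity. Hence
\[
  |\nabla H|^2\le S\,|X^T|^2\le (\sup S)\,|X|^2 .
\]
The same annulus decomposition then gives
\[
  \int_\Sigma|X|^2\rho\,d\mu\le\sum_k C(k+1)^{n+2}e^{-k^2/2}<\infty .
\]

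The only nontrivial input is the polynomial volume growth for properly immersed shrinkers, which is quoted from the literature. The sign convention in \(\nabla H=A(X^T)\) is irrelevant, since only norms enter. I therefore expect no real obstacle beyond correctly citing Cheng--Zhou (properness \(\Leftrightarrow\) polynomial volume growth \(\Leftrightarrow\) finite Gaussian volume).
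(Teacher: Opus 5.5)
Your proof is correct and follows essentially the same route as the paper. Both arguments use the identity \(\nabla_i H=\sum_j h_{ij}\langle X,e_j\rangle\) to get \(|\nabla H|^2\le S|X|^2\), and then combine Ding--Xin's Euclidean volume growth with an annulus decomposition to make \(\int_\Sigma(1+|X|)^q\rho\,d\mu\) finite. The only cosmetic difference is that you bound \(H^2\le nS\) by Cauchy--Schwarz, while the paper uses \(|H|=|\langle X,N\rangle|\le|X|\); either works.
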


\begin{proof}
The shrinker equation gives \(|H|=|\langle X,N\rangle|\leq|X|\).  Differentiating \(H=-\langle X,N\rangle\) and using the Weingarten equation \(\overline\nabla_{e_i}N=-\sum_j h_{ij}e_j\), we obtain
\[
  \nabla_i H=\sum_j h_{ij}\langle X,e_j\rangle.
\]
Consequently,
\[
  |\nabla H|\leq |A|\,|X^T|\leq\sqrt S\,|X|.
\]
If \(S\) is bounded, then \(H^2+|\nabla H|^2\leq C(1+|X|^2)\).

A complete properly immersed self-shrinker has Euclidean volume growth by Ding--Xin's theorem~\cite{DingXin2013}: there is a constant \(C\) such that
\[
  \mu(\Sigma\cap B_R)\leq C(1+R)^n,
  \qquad R\geq1.
\]
Decomposing \(\Sigma\) into \(\Sigma\cap B_1\) and the annuli \(\Sigma\cap(B_{j+1}\setminus B_j)\) for \(j\geq1\), we find, for every \(q\geq0\),
\[
  \int_\Sigma(1+|X|)^q e^{-|X|^2/2}\,d\mu
  \leq C_q\left(1+\sum_{j=1}^\infty (1+j)^{n+q}e^{-j^2/2}\right)<\infty.
\]
Taking \(q=0\) and \(q=2\) proves the assertions.  The Gaussian volume is positive because \(\Sigma\) is nonempty and \(\rho>0\).
\end{proof}

\begin{lemma}\label{lem:basic-identities}
Let \(\Sigma^n\subset\R^{n+1}\) be a self-shrinker satisfying \eqref{eq:shrinker}.  Then
\begin{equation}\label{eq:L-H}
  \cL H=(1-S)H.
\end{equation}
Moreover, for compactly supported test functions one has the weighted Green formula
\begin{equation}\label{eq:weighted-green}
  \int_\Sigma u\cL v\,\rho\,d\mu
  =-\int_\Sigma\langle\nabla u,\nabla v\rangle\rho\,d\mu.
\end{equation}
\end{lemma}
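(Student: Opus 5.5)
I will prove the two assertions of Lemma~\ref{lem:basic-identities} separately. The identity \eqref{eq:L-H} is a direct computation in a local orthonormal frame, which I take to be normal at the point under consideration, so that \(\nabla_{e_i}e_j=0\) there. The weighted Green formula then follows from the divergence theorem applied to a compactly supported vector field.

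For \eqref{eq:L-H}, I start from the formula already derived in the proof of Lemma~\ref{lem:finite-H1}, namely
\[
  \nabla_i H=\sum_j h_{ij}\langle X,e_j\rangle .
\]
Differentiating once more gives
\[
  \nabla_k\nabla_i H=\sum_j h_{ij,k}\langle X,e_j\rangle+\sum_j h_{ij}\,\nabla_k\langle X,e_j\rangle .
\]
At the chosen point,
\[
  \nabla_k\langle X,e_j\rangle=\langle e_k,e_j\rangle+\langle X,\overline\nabla_{e_k}e_j\rangle=\delta_{kj}+h_{kj}\langle X,N\rangle,
\]
because the tangential part of \(\overline\nabla_{e_k}e_j\) vanishes there. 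Now I set \(k=i\) and sum over \(i\). The Codazzi equation \(h_{ij,i}=h_{ii,j}\) turns the first term into \(\sum_j H_{,j}\langle X,e_j\rangle=\langle X,\nabla H\rangle\). The second term becomes \(H+S\langle X,N\rangle\). Using \eqref{eq:shrinker}, \(\langle X,N\rangle=-H\), this yields
\[
  \Delta H=\langle X,\nabla H\rangle+H-SH,
\]
that is, \(\cL H=(1-S)H\). The only step requiring care is the sign bookkeeping: the convention \(h_{ij}=\langle\overline\nabla_{e_i}e_j,N\rangle\) must be consistent with the Weingarten equation \(\overline\nabla_{e_i}N=-\sum_j h_{ij}e_j\) and with the shrinker normalization. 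I will recheck these signs against the model cylinder, where \(S=1\) and \(H\) is constant.

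For \eqref{eq:weighted-green}, I note that \(\nabla\rho=-\rho\,X^T\), since \(\nabla|X|^2/2=X^T\). Hence
\[
  \operatorname{div}_\Sigma(\rho\,u\nabla v)=\rho\langle\nabla u,\nabla v\rangle+\rho\,u\Delta v-\rho\,u\langle X,\nabla v\rangle=\rho\langle\nabla u,\nabla v\rangle+\rho\,u\cL v .
\]
If \(u\) or \(v\) has compact support, the divergence theorem on the boundaryless manifold \(\Sigma\) shows that the integral of the left-hand side vanishes. This gives the formula.

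None of these steps is a real obstacle. The main point is the correct use of Codazzi and the shrinker equation in the computation of \(\cL H\).
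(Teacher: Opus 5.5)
Your proposal is correct and follows essentially the same route as the paper: differentiating \(\nabla_iH=\sum_j h_{ij}\langle X,e_j\rangle\) in a normal frame, tracing with Codazzi, and substituting \(\langle X,N\rangle=-H\) gives \(\cL H=(1-S)H\), while the weighted Green formula is the divergence theorem applied to \(\rho u\nabla v\), using \(\operatorname{div}_\Sigma(\rho\nabla v)=\rho\cL v\). Your observation that compact support of either \(u\) or \(v\) suffices is a harmless slight strengthening of what the paper states.
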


\begin{proof}
We first derive \eqref{eq:L-H}.  Fix a point and choose a local orthonormal frame that is geodesic at that point.  From the computation in the proof of Lemma~\ref{lem:finite-H1},
\[
  H_{,i}=\sum_k h_{ik}\langle X,e_k\rangle.
\]
Differentiating once more and using
\[
  e_j\langle X,e_k\rangle
  =\delta_{jk}+h_{jk}\langle X,N\rangle
  =\delta_{jk}-Hh_{jk},
\]
we obtain
\[
  H_{,ij}
  =\sum_k h_{ik,j}\langle X,e_k\rangle+h_{ij}
   -H\sum_k h_{ik}h_{jk}.
\]
Taking the trace and using the Codazzi equations gives
\[
  \Delta H=\langle X,\nabla H\rangle+H-HS,
\]
which is \eqref{eq:L-H}.  Since
\[
  \operatorname{div}_\Sigma(\rho\nabla v)=\rho\cL v,
\]
\eqref{eq:weighted-green} follows from the ordinary divergence theorem.
\end{proof}

The following two-dimensional constant-\(S\) classification, due to Ding--Xin~\cite{DingXin2014} and Cheng--Ogata~\cite{ChengOgata2016}, will also be used.

\begin{theorem}[Ding--Xin~\cite{DingXin2014}; Cheng--Ogata~\cite{ChengOgata2016}]\label{thm:surface-constant-S}
Let \(\Sigma^2\subset\R^3\) be a complete properly immersed self-shrinker satisfying \eqref{eq:shrinker}.  If \(S\) is constant, then \(\Sigma\) is a hyperplane, the round cylinder \(\Sn^1(1)\times\R\), or the round sphere \(\Sn^2(\sqrt2)\).  In particular, 
a complete properly immersed self-shrinking surface in \(\R^3\) with constant \(S\) has \(S\in\{0,1\}\).
\end{theorem}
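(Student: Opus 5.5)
The plan is to combine the Simons-type identity for self-shrinkers with the two-dimensional Codazzi algebra, and then use weighted integral identities, which are justified by properness.

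\textbf{Setup and first reduction.} Since \(S\) is constant, the second fundamental form is bounded. Lemma~\ref{lem:finite-H1} then shows that all weighted integrals of polynomial expressions in \(X\), \(A\), \(\nabla A\) that we need are finite. A cutoff argument lets us use the weighted Green formula \eqref{eq:weighted-green} on \(\Sigma\) without compact support. Next I would recall the Simons identity for self-shrinkers in this normalization:
\[
  \tfrac12\cL S=|\nabla A|^2+S(1-S).
\]
Since \(S\) is constant, this gives \(|\nabla A|^2=S(S-1)\) pointwise. Hence either \(S=0\) or \(S\geq1\).
\begin{itemize}
\item If \(S=0\), then \(\Sigma\) is totally geodesic. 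Then \(H=0\) forces \(\langle X,N\rangle=0\), so \(\Sigma\) is a plane through the origin.
\item If \(S=1\), the Cao--Li first gap gives the cylinder or the sphere.
\end{itemize}
The whole task is therefore to exclude constant \(S>1\).

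\textbf{Codazzi algebra in dimension two.} At a non-umbilic point, diagonalize \(A=\operatorname{diag}(\kappa_1,\kappa_2)\). Differentiating \(S=\kappa_1^2+\kappa_2^2\) gives \(\kappa_1h_{11,k}+\kappa_2h_{22,k}=0\). So we may write \(h_{11,k}=c_k\kappa_2\) and \(h_{22,k}=-c_k\kappa_1\), which gives \(H_{,k}=c_k(\kappa_2-\kappa_1)\). Using \(h_{12,1}=h_{11,2}\) and \(h_{12,2}=h_{22,1}\), we get
\[
  |\nabla A|^2=c_1^2(\kappa_2^2+3\kappa_1^2)+c_2^2(3\kappa_2^2+\kappa_1^2).
\]
Together with \(|\nabla A|^2=S(S-1)\) and \(H^2=S+2K\), this gives pointwise two-sided bounds of \(|\nabla H|^2\) in terms of \((S-1)\), \(S\) and \((H^2-2K)\). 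Umbilic points need separate care: there \(\kappa_1=\kappa_2\), and since \(S\) is constant the umbilic set is either all of \(\Sigma\) or has empty interior.

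\textbf{Integral identities.} Multiplying \eqref{eq:L-H} by \(H\rho\) and integrating gives
\[
  \int|\nabla H|^2\rho=(S-1)\int H^2\rho .
\]
I would also compute \(\cL\) of a second invariant, either \(f_3=\sum\kappa_i^3\) or the Gauss curvature \(K=(H^2-S)/2\), and integrate it against \(\rho\). Inserting the pointwise Codazzi relations into these integrated identities, I aim for an inequality of the form
\[
  (S-1)\int \Phi\,\rho\,d\mu\leq0
\]
with \(\Phi\geq0\), and \(\Phi\not\equiv0\) unless \(\Sigma\) is a standard model. That inequality forces \(S=1\).

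\textbf{Main obstacle.} I expect the main obstacle to be choosing the right combination of these identities so that the weights close up in the whole range \(S>1\). This is exactly the hard algebra of Ding--Xin. My first attempt would be the combination built from \(\cL(H^2)\) and \(\cL f_3\). If that fails, I would adapt Cheng--Ogata's generalized maximum principle applied to \(H^2\) or \(K\): at an approximate supremum, the gradient terms vanish and the zeroth-order terms should force \(S\leq1\).
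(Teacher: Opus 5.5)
The paper does not prove this statement; it is quoted from Ding--Xin and Cheng--Ogata. Judged on its own, your sketch is fine up to the reduction. The Simons identity gives $|\nabla A|^2=S(S-1)$, so $S=0$ or $S\geq1$, and you handle the cases $S=0$ and $S=1$ correctly (Cao--Li applies because properness gives polynomial volume growth). But the whole content of the theorem is ruling out a constant $S>1$. There you only describe an inequality $(S-1)\int\Phi\rho\,d\mu\leq0$ that you hope to find, and you never produce $\Phi$. That is a genuine gap. A small slip along the way: the factor controlling $|\nabla H|^2$ is $(\kappa_1-\kappa_2)^2=H^2-4K=2S-H^2$, not $H^2-2K$, which equals $S$.

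Moreover, the ingredients you name cannot close this gap as they stand. All integrals below are with respect to $\rho\,d\mu$.
\begin{itemize}
\item In dimension two with $S$ constant, $K=\frac12(H^2-S)$ and $f_3=\frac12H(3S-H^2)$ are functions of $H$. So $\cL K$ and $\cL f_3$ follow from $\cL H=(1-S)H$ by the chain rule. Equating the two expressions for $\cL f_3$ only returns the pointwise identity $|\nabla A|^2-|\nabla H|^2=2H(c_1^2\kappa_1+c_2^2\kappa_2)$, which already follows from your Codazzi parametrization.
\item The natural combination of what remains does not reach $S\leq1$. Using $\int|\nabla H|^2=(S-1)\int H^2$, your bound $|\nabla H|^2\geq\frac{S-1}{3}(2S-H^2)$, the identity $|\nabla H|^2=|A(X^T)|^2\leq S|X^T|^2$, and $\int|X|^2=2\int 1$, you get only $\frac{S}{2}\int 1\leq\int H^2\leq\frac{2S}{2S-1}\int 1$, i.e.\ $S\leq\frac{5}{2}$.
\item The maximum-principle fallback also fails. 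At an approximate supremum of $H^2$ (equivalently of $K$), $\frac12\cL H^2=|\nabla H|^2+(1-S)H^2$ only yields $(1-S)\sup H^2\leq0$, which is consistent with $S>1$.
\item Worse, your lower bound for $|\nabla H|^2$ forces $2S-H^2\to0$ wherever $\nabla H\to0$. So approximate extrema of $H$ are asymptotically umbilic. At an umbilic point, differentiating $\sum h_{ij}h_{ijk}=0$ gives $\kappa\Delta H=-S(S-1)$. This agrees exactly with $\cL H=(1-S)H$, because $S=2\kappa^2$ there, so second-order information about $H$ at its extrema yields no contradiction.
\end{itemize}
What is missing is a genuinely higher-order input. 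One candidate is the Simons-type identity for $\frac12\cL|\nabla A|^2$: its left-hand side vanishes because $|\nabla A|^2$ is constant, and it brings in $|\nabla^2A|^2$ and cubic terms in $A$ and $\nabla A$. Unless you carry out such an argument, the statement has to be cited, as the paper does.
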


We also need the next lemma, which is the point that removes the last alternative in
Theorem~\ref{thm:huisken-mean-convex}.

\begin{lemma}\label{lem:AL-spectral-obstruction}
Let \(\Gamma\subset\R^2\) be a non-round closed Abresch--Langer
self-shrinking curve, possibly traversed more than once, and let
\[
  \Sigma=\Gamma\times\R^d,
  \qquad d\geq0.
\]
Let \(p\) be the rotation number of \(\Gamma\), and let \(q\) be the number of
periods of its curvature along one traversal.  Then
\[
  \frac12<\frac pq<\frac1{\sqrt2},
  \qquad p\geq2,
  \qquad q\geq3.
\]
If \(k\) is the positive curvature of \(\Gamma\), and
\[
  a=\min_\Gamma k,
  \qquad
  b=\max_\Gamma k,
\]
then
\begin{equation}\label{eq:AL-gap-chain}
  \lambda_\rho(\Sigma)
  =\lambda_\rho(\Gamma)
  <\frac1{p^2}
  <\frac{2}{p^2+1}
  <b^2-1.
\end{equation}
Moreover, \(S_\Sigma=k^2\).  Consequently,
\begin{equation}\label{eq:AL-obstruction}
  \max_\Sigma S_\Sigma>1+\lambda_\rho(\Sigma).
\end{equation}
\end{lemma}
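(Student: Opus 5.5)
The plan is to reduce everything to the planar curve \(\Gamma\). I would verify the assertions in the order they are stated, using the curve's ODE in the tangent-angle parameter.

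\textbf{Step 1: the curve ODE and the Gaussian weight.} Parametrize \(\Gamma\) by arclength \(s\), with tangent angle \(\theta\), so \(d\theta/ds=k\). The shrinker equation gives \(k=-\langle X,N\rangle\). Differentiating, \(k_s=k\langle X,T\rangle\). Since \((\log\rho)_s=-\langle X,T\rangle\), this shows \(k\rho\equiv c\) is constant along \(\Gamma\), so
\[
  \rho\,ds=c\,\frac{ds}{k}=c\,\frac{d\theta}{k^2}.
\]
In the variable \(\theta\) the curvature satisfies the Abresch--Langer equation \(k_{\theta\theta}+k=1/k\). This equation has the first integral \(k_\theta^2+k^2-2\log k=E\), and its oscillation lies between \(a<1<b\).

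\textbf{Step 2: rotation number, periods and \(S_\Sigma\).} Let \(T(E)\) be the \(\theta\)-period of \(k\). Closing the curve gives \(qT=2\pi p\). Abresch--Langer showed that \(T\) decreases strictly from \(\sqrt2\pi\) (the linearization \(k_{\theta\theta}+2(k-1)=0\) at \(k\equiv1\)) to \(\pi\) as the amplitude tends to infinity. This yields \(\tfrac12<p/q<1/\sqrt2\).

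The case \(p=1\) would force \(q<2\), which is impossible for a non-round curve. Hence \(p\geq2\), and then \(q>\sqrt2\,p\geq2\sqrt2\) gives \(q\geq3\).

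Since \(\Sigma=\Gamma\times\R^d\) has principal curvatures \(k,0,\dots,0\), we get \(S_\Sigma=k^2\).

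\textbf{Step 3: \(\lambda_\rho(\Sigma)=\lambda_\rho(\Gamma)\).} The Gaussian weight factorizes, so \(\lambda_\rho(\Sigma)=\min\{\lambda_\rho(\Gamma),\lambda_\rho(\R^d)\}\) by separation of variables. The Gaussian Poincaré constant of \(\R^d\) in this normalization is \(1\) (Hermite polynomials), and \(\lambda_\rho(\Gamma)\leq1\) by the coordinate functions. Hence the minimum is \(\lambda_\rho(\Gamma)\).

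\textbf{Step 4: \(\lambda_\rho(\Gamma)<1/p^2\) (main obstacle).} I would test with \(f=\cos(\theta/p+\varphi)\), which is single-valued because \(\theta\) runs over \([0,2\pi p]\). Choose \(\varphi\) so that \(\int f k^{-2}\,d\theta=0\). By Step 1, \(|\nabla f|^2=k^2f_\theta^2\), so the Rayleigh quotient is
\[
  \frac{p^{-2}\int_0^{2\pi p}\sin^2(\theta/p+\varphi)\,d\theta}
       {\int_0^{2\pi p}\cos^2(\theta/p+\varphi)\,k^{-2}\,d\theta}.
\]
It therefore suffices to show that the weighted denominator exceeds \(\int\sin^2=\pi p\).

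Two facts should give this. First, \(\langle k^{-2}\rangle>1\) on average: integrating the ODE gives \(\langle k\rangle=\langle 1/k\rangle\), and this should be combined with the non-constancy of \(k\) and Jensen's inequality. Second, since \(q>p\), the weight \(k^{-2}\) oscillates \(q\) times per traversal, faster than \(\cos^2(\theta/p+\varphi)\) varies. Its coupling with \(\cos^2\) should then be controlled by a Fourier/averaging argument over the \(q\) periods.

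This comparison is the step I expect to be hardest. The zero-mean constraint prevents a simple trace argument over \(\{\cos,\sin\}\). If the direct estimate fails, I would enlarge the test space to include \(\cos(2\theta/p)\) and use a min-max argument.

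\textbf{Step 5: the remaining inequalities.} The inequality \(1/p^2<2/(p^2+1)\) is just \(p^2>1\). For \(b^2-1>2/(p^2+1)\), note that this is at most \(2/5\) when \(p\geq2\). I would use the monotonicity of \(T\) together with \(T=2\pi p/q<\sqrt2\pi\cdot p/q\)'s quantitative gap from \(\sqrt2\pi\) to bound the energy \(E\) below. Via \(E=b^2-2\log b\), this gives a lower bound on \(b\). A second-order expansion of \(T(E)\) near \(k\equiv1\) should suffice.

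Finally, \(\max_\Sigma S_\Sigma=b^2>1+\lambda_\rho(\Sigma)\) follows from the chain \eqref{eq:AL-gap-chain}.
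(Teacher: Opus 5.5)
\textbf{Main gap: Step 5.} A second-order expansion of the period \(T(E)\) about \(E=1\) only describes curves whose curvature oscillation is small, i.e.\ large \(p,q\). But \(b^2-1>2/(p^2+1)\) must hold for every admissible pair, including \((p,q)=(2,3)\). There the oscillation of \(k\) is of order one, and an expansion without an explicit global remainder says nothing. Monotonicity of \(T\) only gives the qualitative range of \(p/q\). To turn the Diophantine gap \(q^2-2p^2\geq1\) into a lower bound on the amplitude, you need a \emph{global, quantitative} lower bound for the period in terms of the amplitude.

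The paper obtains this by Sturm comparison. The function \(v=k-1\) satisfies \(v_{\theta\theta}+(1+1/k)v=0\) and has exactly two simple zeros per period. On each nodal interval, the Dirichlet Poincar\'e inequality together with \(1+1/k\leq 1+1/a\) forces length \(\geq\pi/\sqrt{1+a^{-1}}\). Hence \(2\pi p/q\geq 2\pi/\sqrt{1+a^{-1}}\), i.e.\ \(a\leq p^2/(q^2-p^2)\). Integrality then gives \(1-a\geq D/(p^2+D)\geq 1/(p^2+1)\) with \(D=q^2-2p^2\). One passes from \(a\) to \(b\) using the asymmetry of \(\Phi(t)=t^2-2\log t\): since \(\Phi(1+s)<\Phi(1-s)\) for \(0<s<1\) and \(\Phi(a)=\Phi(b)\), one gets \(b-1>1-a\), hence \(b^2-1>2(b-1)>2/(p^2+1)\). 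The natural quantity to control is the minimum \(a\), not \(E\) or \(b\), because the comparison coefficient \(1+1/k\) is largest where \(k=a\).

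\textbf{Step 4 is also unfinished,} though your Fourier instinct is right and the argument is \emph{exact}, not an estimate. Since \(k^{-2}\) has period \(T_0=2\pi p/q\),
\(\int_0^{2\pi p}g\,k^{-2}\,d\theta=\int_0^{T_0}k^{-2}(\tau)\sum_{j=0}^{q-1}g(\tau+jT_0)\,d\tau\).
For \(g=\cos(\theta/p+\varphi)\) and \(g=\cos^2(\theta/p+\varphi)\), the inner sums over \(q\) equally spaced angles equal \(0\) (as \(q\geq2\)) and \(q/2\) (as \(q\geq3\)). So every phase already gives a weighted-mean-zero test function; your worry that the zero-mean constraint blocks a trace argument is unfounded. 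Moreover \(\int_0^{2\pi p}\cos^2(\theta/p+\varphi)\,k^{-2}\,d\theta=\frac q2\int_0^{T_0}k^{-2}\,d\theta\), with no coupling term at all.

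It then remains to show \(\int_0^{T_0}k^{-2}\,d\theta>T_0\). Your Jensen route works: \(\langle k^{-2}\rangle>\langle k^{-1}\rangle^2=\langle k\rangle\langle k^{-1}\rangle\geq1\). The paper instead divides the ODE by \(k\) and integrates to get \(\int_0^{T_0}k^{-2}\,d\theta=T_0+\int_0^{T_0}(k_\theta/k)^2\,d\theta\).

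Your Step 3 (tensorization of the Poincar\'e inequality, with Gaussian constant \(1\) on \(\R^d\)) is correct. It actually supplies the identity \(\lambda_\rho(\Sigma)=\lambda_\rho(\Gamma)\), which the paper asserts but does not argue.
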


\begin{proof}
Choose the orientation and the unit normal \(N\) so that the Frenet equations
are
\[
  T_s=kN,
  \qquad
  N_s=-kT,
  \qquad
  k>0,
\]
and the self-shrinker equation is
\[
  k+\langle\Gamma,N\rangle=0.
\]
Here \(s\) is arclength.  Let \(\theta\) be the turning-angle parameter, so that
\(d\theta/ds=k\).  Differentiating the self-shrinker equation gives
\[
  k_s=k\langle\Gamma,T\rangle,
  \qquad
  k_\theta=\langle\Gamma,T\rangle.
\]
Differentiating once more with respect to \(\theta\), we obtain
\begin{equation}\label{eq:AL-curvature-ode}
  k_{\theta\theta}+k=\frac1k.
\end{equation}
Multiplication by \(2k_\theta\) shows that
\begin{equation}\label{eq:AL-first-integral}
  k_\theta^2+k^2-2\log k=E
\end{equation}
for a constant \(E\).  Since the function \(t^2-2\log t\) has its unique minimum
\(1\) at \(t=1\), a nonconstant periodic solution has \(E>1\).  Its two turning
values therefore satisfy
\[
  0<a<1<b,
  \qquad
  a^2-2\log a=b^2-2\log b=E.
\]
Moreover, \eqref{eq:AL-first-integral} shows that these are the only turning
values; uniqueness for \eqref{eq:AL-curvature-ode} then implies that \(k\) is
strictly monotone between consecutive extrema.
The total change of \(\theta\) along one traversal is \(2\pi p\), and the
curvature has period
\begin{equation}\label{eq:AL-period}
  T_0=\frac{2\pi p}{q}.
\end{equation}
These are the standard integer parameters in the Abresch--Langer
classification~\cite{AbreschLanger1986}.  The displayed inequalities and
integrality imply \(p\geq2\) and \(q\geq3\): indeed, \(p=1\) would force the
integer \(q\) to lie strictly between \(\sqrt2\) and \(2\).  If the curve is a
multiple traversal, \(p\) and \(q\) are multiplied by the same integer; the
argument below uses the parameters of that actual traversal.

We next reduce the weighted eigenvalue problem to one dimension.  From the two
components of \(\Gamma\) in the Frenet frame and
\eqref{eq:AL-first-integral},
\[
  |\Gamma|^2
  =\langle\Gamma,T\rangle^2+\langle\Gamma,N\rangle^2
  =k_\theta^2+k^2
  =E+2\log k.
\]
Consequently,
\begin{equation}\label{eq:AL-weight-measure}
  \rho\,ds=e^{-E/2}\frac{d\theta}{k^2}.
\end{equation}
For a smooth function \(u=u(\theta)\),
\[
  u_s=ku_\theta,
  \qquad
  u_{ss}=kk_\theta u_\theta+k^2u_{\theta\theta},
  \qquad
  \langle\Gamma,T\rangle u_s=kk_\theta u_\theta.
\]
Thus the drift Laplacian on \(\Gamma\) is
\begin{equation}\label{eq:AL-drift-reduction}
  \mathcal L_\Gamma u=k^2u_{\theta\theta}.
\end{equation}
It follows from \eqref{eq:AL-weight-measure} that
\begin{equation}\label{eq:AL-rayleigh}
  \lambda_\rho(\Gamma)
  =\inf_{\substack{0\not\equiv u\in H^1_{\rm per}(0,2\pi p)\\
      \int_0^{2\pi p}u k^{-2}\,d\theta=0}}
  \frac{\displaystyle\int_0^{2\pi p}u_\theta^2\,d\theta}
       {\displaystyle\int_0^{2\pi p}u^2k^{-2}\,d\theta}.
\end{equation}
Equivalently, its eigenfunctions solve the periodic Sturm--Liouville problem
\[
  -u_{\theta\theta}=\lambda k^{-2}u,
  \qquad
  u(\theta+2\pi p)=u(\theta),
  \qquad
  u_\theta(\theta+2\pi p)=u_\theta(\theta).
\]

Set \(u(\theta)=\sin(\theta/p)\).  Since \(k^{-2}\) has period \(T_0\),
\[
\begin{aligned}
  \int_0^{2\pi p}u k^{-2}\,d\theta
  &=\int_0^{T_0}k(\tau)^{-2}
    \sum_{j=0}^{q-1}
    \sin\left(\frac{\tau}{p}+\frac{2\pi j}{q}\right)d\tau
  =0.
\end{aligned}
\]
Moreover, because \(q\geq3\),
\[
  \sum_{j=0}^{q-1}
  \sin^2\left(\frac{\tau}{p}+\frac{2\pi j}{q}\right)=\frac q2.
\]
Dividing \eqref{eq:AL-curvature-ode} by \(k\), integrating over one
curvature period, and using periodicity gives
\begin{equation}\label{eq:AL-weight-period}
  \int_0^{T_0}\frac{d\theta}{k^2}
  =T_0+\int_0^{T_0}\left(\frac{k_\theta}{k}\right)^2d\theta
  >T_0,
\end{equation}
where the inequality is strict because \(\Gamma\) is not round.  Therefore the
Rayleigh quotient of \(u\) satisfies
\[
\begin{aligned}
  \lambda_\rho(\Gamma)
  &\leq
  \frac{\displaystyle\int_0^{2\pi p}p^{-2}\cos^2(\theta/p)\,d\theta}
       {\displaystyle\int_0^{2\pi p}\sin^2(\theta/p)k^{-2}\,d\theta}  \\
  &=\frac{\pi/p}
    {\displaystyle(q/2)\int_0^{T_0}k^{-2}\,d\theta}
  <\frac{\pi/p}{(q/2)T_0}
  =\frac1{p^2}.
\end{aligned}
\]

We now prove the lower bound for \(b\).  Put \(v=k-1\).  Equation
\eqref{eq:AL-curvature-ode} becomes
\begin{equation}\label{eq:AL-v-equation}
  v_{\theta\theta}+\left(1+\frac1k\right)v=0.
\end{equation}
After translating \(\theta\), on one curvature period \(k\) increases strictly
from \(a\) to \(b\) and then decreases strictly back to \(a\).  Since
\(a<1<b\), the function \(v\) has exactly one zero on each monotone branch.
Both zeros are simple: at a point where \(k=1\),
\eqref{eq:AL-first-integral} gives \(k_\theta^2=E-1>0\).  Thus \(v\) has exactly
two simple zeros on that period.  Let the two nodal intervals have lengths \(L_1\) and \(L_2\), so
that \(L_1+L_2=T_0\).  Multiplying \eqref{eq:AL-v-equation} by \(v\) on a
nodal interval and using the Dirichlet Poincar\'e inequality gives
\[
  \frac{\pi^2}{L_i^2}\int v^2\,d\theta
  \leq\int v_\theta^2\,d\theta
  =\int\left(1+\frac1k\right)v^2\,d\theta
  \leq\left(1+\frac1a\right)\int v^2\,d\theta.
\]
Thus
\[
  L_i\geq\frac{\pi}{\sqrt{1+a^{-1}}},
  \qquad i=1,2.
\]
Using \eqref{eq:AL-period}, we obtain
\[
  \frac{2\pi p}{q}=T_0=L_1+L_2
  \geq\frac{2\pi}{\sqrt{1+a^{-1}}},
\]
and hence
\begin{equation}\label{eq:AL-min-curvature-bound}
  a\leq\frac{p^2}{q^2-p^2}.
\end{equation}
Since \(q^2>2p^2\) and both sides are integers,
\[
  D:=q^2-2p^2\geq1.
\]
It follows from \eqref{eq:AL-min-curvature-bound} that
\begin{equation}\label{eq:AL-a-gap}
  1-a\geq
  1-\frac{p^2}{q^2-p^2}
  =\frac{D}{p^2+D}
  \geq\frac1{p^2+1}.
\end{equation}

It remains to compare the two oscillation amplitudes.  Let
\[
  \Phi(t)=t^2-2\log t.
\]
Then \(\Phi(a)=\Phi(b)\), and \(\Phi\) is strictly increasing on
\((1,\infty)\).  If \(s=1-a\in(0,1)\), then
\[
\begin{aligned}
  \Phi(1+s)-\Phi(1-s)
  &=4s-2\log\frac{1+s}{1-s}<0,
\end{aligned}
\]
because
\[
  \log\frac{1+s}{1-s}
  =2\int_0^s\frac{dt}{1-t^2}>2s.
\]
If \(b-1\leq1-a=s\), then \(b\leq1+s\), and therefore
\[
  \Phi(b)\leq\Phi(1+s)<\Phi(1-s)=\Phi(a),
\]
a contradiction.  Hence \(b-1>1-a\).  Combining this with
\eqref{eq:AL-a-gap} yields
\[
  b^2-1=(b-1)(b+1)
  >2(b-1)
  >2(1-a)
  \geq\frac{2}{p^2+1}.
\]
Since \(p\geq2\),
\[
  \frac{2}{p^2+1}>\frac1{p^2}.
\]
This proves the three strict inequalities in \eqref{eq:AL-gap-chain} for the
curve itself.

\end{proof}

\section{Proof of the pinching theorems}\label{sec:proof}

In this section we prove the main pinching theorems.  The first step is an abstract fixed-sign lemma for the mean curvature under a weighted spectral gap and an upper curvature pinching.  We then prove the properly immersed spectral theorem, derive the embedded consequences, and treat the two-dimensional endpoint.

\begin{lemma}\label{lem:H-fixed-sign}
Let \(X:\Sigma^n\to\R^{n+1}\) be a complete properly immersed self-shrinker satisfying \eqref{eq:shrinker}.  Suppose that, for some \(\lambda>0\),
\[
  \lambda_\rho(\Sigma)\geq\lambda
\]
and
\[
  S<1+\lambda
  \quad\text{on }\Sigma .
\]
Then \(H\) does not change sign.
\end{lemma}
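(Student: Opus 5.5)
Suppose, for contradiction, that \(H\) changes sign. Write \(H_+=\max(H,0)\) and \(H_-=\max(-H,0)\); both are nonzero, locally Lipschitz, and supported on disjoint sets. They lie in the weighted \(H^1\) class because of Lemma~\ref{lem:finite-H1}, using that \(S<1+\lambda\) is bounded and that \(|\nabla H_\pm|\le|\nabla H|\) almost everywhere. The plan has three steps: obtain an energy identity for each sign-part from \eqref{eq:L-H}, form the zero-mean combination
\[
  f=\alpha H_+-\beta H_-,
  \qquad
  \alpha=\int_\Sigma H_-\rho\,d\mu>0,
  \quad
  \beta=\int_\Sigma H_+\rho\,d\mu>0,
\]
and feed \(f\) into the definition of \(\lambda_\rho(\Sigma)\).

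\emph{Step 1: the energy identities.} Multiply \eqref{eq:L-H} by \(H_+\phi^2\), where \(\phi\) is a radial cutoff in \(|X|\), and integrate using the weighted Green formula \eqref{eq:weighted-green}. Letting the cutoff go to \(1\) should give
\[
  \int_\Sigma|\nabla H_+|^2\rho\,d\mu=\int_\Sigma(S-1)H_+^2\rho\,d\mu .
\]
The passage to the limit needs some care. The cutoff error is bounded by
\[
  \int_\Sigma |H_+|\,|\nabla H|\,|\nabla\phi|\,\phi\,\rho\,d\mu ,
\]
which tends to \(0\) by the finiteness statement in Lemma~\ref{lem:finite-H1} and dominated convergence. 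The same argument applied to \(H_-\), which also satisfies \(\cL H_-=(1-S)H_-\) on \(\{H<0\}\), gives the analogous identity for \(H_-\). Because \(\nabla H=0\) almost everywhere on \(\{H=0\}\), the integration by parts across the zero set is legitimate.

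\emph{Step 2: the Poincar\'e inequality.} Since \(\int_\Sigma f\rho\,d\mu=0\) and the supports are disjoint, the definition of \(\lambda_\rho(\Sigma)\ge\lambda\) gives
\[
  \alpha^2\int_\Sigma|\nabla H_+|^2\rho\,d\mu+\beta^2\int_\Sigma|\nabla H_-|^2\rho\,d\mu
  \ \ge\ \lambda\left(\alpha^2\int_\Sigma H_+^2\rho\,d\mu+\beta^2\int_\Sigma H_-^2\rho\,d\mu\right).
\]
Substituting the identities from Step 1 turns this into
\[
  \int_\Sigma(S-1-\lambda)\,f^2\rho\,d\mu\ \ge\ 0 .
\]

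\emph{Step 3: the contradiction.} The hypothesis \(S<1+\lambda\) holds pointwise, so the integrand \((S-1-\lambda)f^2\) is everywhere \(\le0\) and is strictly negative wherever \(f\ne0\). Since \(f\not\equiv0\), the integral is strictly negative, which contradicts Step 2. Hence \(H\) does not change sign.

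I expect the main obstacle to be the rigorous justification of the energy identities in Step 1 on a noncompact \(\Sigma\): the truncation by \(H_\pm\) is only Lipschitz, and the cutoff error has to be shown to vanish. Both points are handled by Lemma~\ref{lem:finite-H1} together with the bound \(|\nabla H|\le\sqrt S\,|X|\).
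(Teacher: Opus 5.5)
Your proposal is correct and follows the paper's proof step for step. Your test function $f=\alpha H_+-\beta H_-$ is $\int_\Sigma H_-\rho\,d\mu$ times the paper's $\varphi=H_+-\frac{a}{b}H_-$, and the energy identities and the final contradiction are the same. The only difference is cosmetic: you apply the weighted Green formula directly to the compactly supported Lipschitz function $H_+\phi^2$, whereas the paper first smooths the truncation with $P_\varepsilon$ and lets $\varepsilon\to0$; both are legitimate.
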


\begin{proof}
The curvature assumption gives a uniform bound for \(S\).  By Lemma~\ref{lem:finite-H1},
\[
  H\in H^1(\rho\,d\mu),
  \qquad
  \int_\Sigma\rho\,d\mu<\infty.
\]
In particular, \(H\in L^1(\rho\,d\mu)\).  Suppose that \(H\) changes sign, and set
\[
  H_+=\max\{H,0\},
  \qquad
  H_-=\max\{-H,0\}.
\]
The truncation maps are Lipschitz, so \(H_+,H_-\in H^1(\rho\,d\mu)\).  Moreover
\[
  0<a:=\int_\Sigma H_+\rho\,d\mu<\infty,
  \qquad
  0<b:=\int_\Sigma H_-\rho\,d\mu<\infty.
\]
Define
\[
  \varphi=H_+-\frac{a}{b}H_-.
\]
Then \(\varphi\in H^1(\rho\,d\mu)\) and \(\int_\Sigma\varphi\rho\,d\mu=0\).  Hence the spectral-gap assumption gives
\begin{equation}\label{eq:phi-poincare}
  \lambda\int_\Sigma\varphi^2\rho\,d\mu
  \leq
  \int_\Sigma|\nabla\varphi|^2\rho\,d\mu .
\end{equation}

We next justify the positive- and negative-part testing identities.  This is the only point where both the noncompactness of \(\Sigma\) and the nonsmoothness of the truncations have to be treated carefully.  For \(0<\varepsilon<1\), set
\[
  P_\varepsilon(t)=
  \begin{cases}
    \sqrt{t^2+\varepsilon^2}-\varepsilon, & t>0,\\[2mm]
    0, & t\leq0 .
  \end{cases}
\]
Then \(P_\varepsilon\in C^{1,1}(\mathbb R)\), \(0\leq P_\varepsilon(t)\leq t_+\), \(P_\varepsilon(t)\to t_+\), and
\[
  P_\varepsilon'(t)=
  \begin{cases}
    \dfrac{t}{\sqrt{t^2+\varepsilon^2}}, & t>0,\\[2mm]
    0, & t\leq0 .
  \end{cases}
\]
In particular, \(0\leq P_\varepsilon'\leq1\) and \(P_\varepsilon'(t)\to\chi_{\{t>0\}}\) for every \(t\).  The function is only \(C^{1,1}\), but this is sufficient for the chain rule.  Equivalently, one may first replace \(P_\varepsilon\) by smooth convex mollifications with the same uniform Lipschitz bounds and then pass to the limit.

Let \(\eta_R\) be a standard ambient cutoff: \(\eta_R\equiv1\) on \(B_R\), \(\operatorname{spt}\eta_R\subset B_{2R}\), \(|\nabla\eta_R|\leq C/R\), and \(\eta_R\) has compact support on \(\Sigma\) by properness.  Since \(P_\varepsilon(H)\) is compactly supported after multiplication by \(\eta_R^2\), the compactly supported weighted Green formula may be applied with
\[
  u=\eta_R^2P_\varepsilon(H),
  \qquad
  v=H.
\]
Using \(\nabla P_\varepsilon(H)=P_\varepsilon'(H)\nabla H\), we obtain
\begin{align}
  \int_\Sigma \eta_R^2P_\varepsilon(H)\cL H\,\rho\,d\mu
  &=-\int_\Sigma \eta_R^2P_\varepsilon'(H)|\nabla H|^2\rho\,d\mu  \notag\\
  &\quad -2\int_\Sigma \eta_RP_\varepsilon(H)\langle\nabla\eta_R,\nabla H\rangle\rho\,d\mu .
  \label{eq:test-positive-eps-R}
\end{align}
For fixed \(\varepsilon\), the cutoff error tends to zero as \(R\to\infty\).  Indeed, if \(A_R=\Sigma\cap(B_{2R}\setminus B_R)\), then
\[
\begin{aligned}
&\left|2\int_\Sigma \eta_RP_\varepsilon(H)
  \langle\nabla\eta_R,\nabla H\rangle\rho\,d\mu\right| \\
&\qquad\leq
\frac{C}{R}
\left(\int_{A_R} P_\varepsilon(H)^2\rho\,d\mu\right)^{1/2}
\left(\int_{A_R} |\nabla H|^2\rho\,d\mu\right)^{1/2} \\
&\qquad\leq
\frac{C}{R}
\left(\int_{A_R} H^2\rho\,d\mu\right)^{1/2}
\left(\int_{A_R} |\nabla H|^2\rho\,d\mu\right)^{1/2},
\end{aligned}
\]
which tends to zero because \(H,\nabla H\in L^2(\rho\,d\mu)\).  Moreover,
\[
  |P_\varepsilon(H)\cL H|
  = |P_\varepsilon(H)(1-S)H|
  \leq C H^2,
\]
where \(C\) depends only on the bound for \(S\).  Hence dominated convergence gives, after letting \(R\to\infty\) in \eqref{eq:test-positive-eps-R},
\[
  \int_\Sigma P_\varepsilon(H)\cL H\,\rho\,d\mu
  =-
  \int_\Sigma P_\varepsilon'(H)|\nabla H|^2\rho\,d\mu .
\]
Letting \(\varepsilon\to0\), the left-hand side converges to \(\int_\Sigma H_+\cL H\,\rho\,d\mu\).  On the right-hand side,
\[
  P_\varepsilon'(H)|\nabla H|^2
  \longrightarrow
  \chi_{\{H>0\}}|\nabla H|^2
\]
pointwise and is bounded by \(|\nabla H|^2\).  By the Sobolev chain rule for truncations,
\[
  \nabla H_+=\chi_{\{H>0\}}\nabla H
  \quad\text{a.e.},
\]
or, equivalently, by the Stampacchia property \(\nabla H=0\) a.e. on \(\{H=0\}\).  Thus dominated convergence yields
\[
  \int_\Sigma H_+\cL H\,\rho\,d\mu
  =-
  \int_\Sigma |\nabla H_+|^2\rho\,d\mu .
\]
Using \(\cL H=(1-S)H\) and \(HH_+=H_+^2\), we obtain
\begin{equation}\label{eq:H-plus-energy}
  \int_\Sigma |\nabla H_+|^2\rho\,d\mu
  =
  \int_\Sigma (S-1)H_+^2\rho\,d\mu .
\end{equation}

For the negative part, we apply the same argument to \(P_\varepsilon(-H)\).  In the weighted Green formula we take
\[
  u=\eta_R^2P_\varepsilon(-H),
  \qquad
  v=H.
\]
Since \(\nabla P_\varepsilon(-H)=-P_\varepsilon'(-H)\nabla H\), the main term has the opposite sign:
\begin{align*}
  \int_\Sigma \eta_R^2P_\varepsilon(-H)\cL H\,\rho\,d\mu
  &=\int_\Sigma \eta_R^2P_\varepsilon'(-H)|\nabla H|^2\rho\,d\mu  \\
  &\quad -2\int_\Sigma \eta_RP_\varepsilon(-H)
  \langle\nabla\eta_R,\nabla H\rangle\rho\,d\mu .
\end{align*}
The cutoff term is treated exactly as above, using \(0\leq P_\varepsilon(-H)\leq H_-\leq |H|\).  Letting \(R\to\infty\) and then \(\varepsilon\to0\), and using
\[
  \nabla H_-=-\chi_{\{H<0\}}\nabla H
  \quad\text{a.e.},
\]
we obtain
\[
  \int_\Sigma H_-\cL H\,\rho\,d\mu
  =
  \int_\Sigma |\nabla H_-|^2\rho\,d\mu .
\]
Since \(HH_-=-H_-^2\), \eqref{eq:L-H} gives
\begin{equation}\label{eq:H-minus-energy}
  \int_\Sigma |\nabla H_-|^2\rho\,d\mu
  =
  \int_\Sigma (S-1)H_-^2\rho\,d\mu .
\end{equation}

Because \(H_+H_-=0\) and \(\nabla H_+\cdot\nabla H_-=0\) almost everywhere,
\[
  \varphi^2=H_+^2+\frac{a^2}{b^2}H_-^2,
  \qquad
  |\nabla\varphi|^2=|\nabla H_+|^2+\frac{a^2}{b^2}|\nabla H_-|^2.
\]
Combining this with \eqref{eq:H-plus-energy} and \eqref{eq:H-minus-energy}, we find
\[
  \lambda\int_\Sigma\varphi^2\rho\,d\mu
  -
  \int_\Sigma|\nabla\varphi|^2\rho\,d\mu
  =
  \int_\Sigma(1+\lambda-S)
  \left(H_+^2+\frac{a^2}{b^2}H_-^2\right)\rho\,d\mu.
\]
The right-hand side is strictly positive because \(S<1+\lambda\), \(\rho>0\), and \(H_+\) and \(H_-\) are both nontrivial.  This contradicts \eqref{eq:phi-poincare}.  Hence \(H\) cannot change sign.
\end{proof}

\begin{proof}[\textbf{Proof of Theorem~\ref{thm:spectral-pinching}}]
        The fixed-sign conclusion follows directly from Lemma~\ref{lem:H-fixed-sign}.  Since \(S<1+\lambda\), the second fundamental form is bounded.  Moreover, complete properly immersed self-shrinkers have polynomial volume growth by the volume-growth theorem quoted after Theorem~\ref{thm:eigenvalue}.  Hence, using the mean-convex classification in Theorem~\ref{thm:huisken-mean-convex} and Lemma~\ref{lem:AL-spectral-obstruction}, we can conclude that $\Sigma $ is either a hyperplane through the origin or a generalized round cylinder. This completes the proof of the theorem.
\end{proof}

\begin{proof}[\textbf{Proof of Theorem~\ref{thm:embedded-main}}]
Since \(\Sigma\) is properly embedded, Theorem~\ref{thm:eigenvalue} gives \(\lambda_\rho(\Sigma)\geq1/2\). Then the desired conclusion  follows from  Theorem~\ref{thm:spectral-pinching}.
\end{proof}

\begin{proof}[\textbf{Proof of Theorem~\ref{thm:surface-borderline}}]
Theorem~\ref{thm:eigenvalue} applies because \(\Sigma\) is properly embedded.  We first show that \(H\) cannot change sign.  Suppose the contrary, and define \(H_+\), \(H_-\), \(a\), \(b\), and \(\varphi\) as in the proof of Lemma~\ref{lem:H-fixed-sign}.  The same testing identities give
\[
  \frac12\int_\Sigma\varphi^2\rho\,d\mu
  \leq
  \int_\Sigma|\nabla\varphi|^2\rho\,d\mu
  \leq
  \frac12\int_\Sigma\varphi^2\rho\,d\mu.
\]
Therefore equality holds, and
\[
  0=
  \int_\Sigma
  \left(\frac32-S\right)
  \left(H_+^2+\frac{a^2}{b^2}H_-^2\right)\rho\,d\mu.
\]
The integrand is continuous and nonnegative, so it vanishes everywhere.  Hence \(S=3/2\) on \(\{H\neq0\}\).  Since \(H\) solves the linear elliptic equation \(\cL H=(1-S)H\), we have $\Delta \left(\exp^{-\frac{|X|^2}{4}}H\right)=\left( 1-S+ \exp^{\frac{|X|^2}{4}}  \Delta  \exp^{-\frac{|X|^2}{4}}  \right)  \exp^{-\frac{|X|^2}{4}}     H$. Hence, the nodal set theory and the fact that $H$ is not identically zero imply  that \(\{H\neq0\}\) is dense.  By continuity, \(S\equiv3/2\) on \(\Sigma\).  This contradicts Theorem~\ref{thm:surface-constant-S}, which says that a complete properly immersed self-shrinking surface in \(\R^3\) with constant \(S\) must have \(S\in\{0,1\}\).  Hence \(H\) has a fixed sign.  Applying Theorem~\ref{thm:huisken-mean-convex} and using embeddedness  to exclude the Abresch--Langer product alternative gives the stated classification.  In dimension two the remaining alternatives are precisely the hyperplane through the origin, \(\Sphere^1(1)\times\R\), and \(\Sphere^2(\sqrt2)\).
\end{proof}

\begin{proof}[\textbf{Proof of Corollary~\ref{cor:yau-conditional}}]
If \(S<2\) and \(\lambda_\rho(\Sigma)=1\), Theorem~\ref{thm:spectral-pinching}
applies with \(\lambda=1\).  Hence the only properly immersed alternatives are
the hyperplane and  the generalized round cylinders.
\end{proof}
\section{Concluding remarks and related problems}\label{sec:concluding}

We close by clarifying where embeddedness enters the proof.
Theorem~\ref{thm:spectral-pinching} itself is an immersed result: under a
weighted spectral gap and an upper curvature pinching assumption, the signed mean
curvature has a fixed sign.  This part of the argument therefore does not require
embeddedness.  Embeddedness is used only in the subsequent passage to the
properly embedded classification.  First, it provides the lower bound
\(\lambda_\rho\geq1/2\) through the Ding--Xin and Brendle--Tsiamis estimate.
Second, once mean-convexity is obtained, it rules out the non-round
Abresch--Langer products
\(\Gamma\times\R^{n-1}\), which may occur in the immersed mean-convex
classification but are not properly embedded.

The sharp form of the spectral input would be the following Gaussian analogue of
Yau's first-eigenvalue conjecture for closed embedded minimal hypersurfaces in
spheres.
\begin{yauconjecture}
Let \(\Sigma^n\subset\R^{n+1}\) be a complete properly embedded self-shrinker
satisfying \eqref{eq:shrinker}.  Then
$
  \lambda_\rho(\Sigma)=1.
$
\end{yauconjecture}
As explained in the introduction, the coordinate functions give
\(\lambda_\rho(\Sigma)\leq1\).  Thus the conjecture asserts that properly
embedded self-shrinkers have the same sharp Gaussian Poincar\'e constant as the
standard models.  The value \(1\) is attained by the hyperplane, by the round
sphere \(\Sphere^n(\sqrt n)\), and by every generalized round cylinder.  In the
usual normalization of self-shrinkers, Matinpour~\cite{Matinpour2025} recently
proved that the first weighted eigenvalue is \(1/2\) for self-shrinkers in
\(\R^3\) invariant under the dihedral group \(\mathbb D_{g+1}\) or the prismatic
group \(\mathbb D_{g+1}\times\mathbb Z_2\); after rescaling to the normalization
\eqref{eq:shrinker}, this corresponds to \(\lambda_\rho=1\).

This conjecture is intended as the self-shrinker counterpart of Yau's
first-eigenvalue conjecture for embedded minimal hypersurfaces in spheres; see
Yau~\cite{Yau1982} and Schoen--Yau~\cite{SchoenYau1994}.  Supporting results in
the spherical setting include work of Choe--Soret~\cite{ChoeSoret2009} on several
symmetric minimal surfaces in \(\Sphere^3\), the results of
Tang--Yan~\cite{TangYan2013} and Tang--Xie--Yan~\cite{TangXieYan2014} for
isoparametric hypersurfaces and their focal submanifolds, and quantitative
improvements over the Choi--Wang lower bound~\cite{ChoiWang1983} due to
Duncan--Sire--Spruck~\cite{DuncanSireSpruck2024}, Jim\'enez--Tapia
Chinchay--Zhou~\cite{JimenezChinchayZhou2026}, and Zhao~\cite{Zhao2026FirstEigenvalue}.
For background on minimal surfaces in \(\Sphere^3\), see Brendle~\cite{Brendle2013Survey}.

The preceding discussion also suggests several related problems.
\begin{problem}\label{prob:remove-embeddedness}

Let \(X:\Sigma^n\to\R^{n+1}\), \(n\geq2\), be a complete properly immersed
self-shrinker, and let \(|\mathrm{II}|^2\) denote the
squared norm of its vector-valued second fundamental form.  Does there exist a
constant \(c(n)>0\) such that
\[
  |\mathrm{II}|^2<1+c(n)
\]
forces \(X(\Sigma)\)  to be a hyperplane through
the origin, a generalized round cylinder
\(\Sphere^k(\sqrt{k})\times\R^{n-k}\), \(1\leq k\leq n\), or a product
\(\Gamma\times\R^{n-1}\), where \(\Gamma\subset\R^2\) is a non-round
Abresch--Langer self-shrinking curve?

\end{problem}

\begin{problem}\label{prob:high-codim-complete}
Let \(X:\Sigma^n\to\R^{n+p}\), \(n\geq2\), be a complete properly immersed
self-shrinker of arbitrary codimension, and let \(|\mathrm{II}|^2\) denote the
squared norm of its vector-valued second fundamental form.  Does there exist a
constant \(c(n)>0\) such that
\[
  |\mathrm{II}|^2<1+c(n)
\]
forces \(X(\Sigma)\), up to an ambient orthogonal transformation, to be a hyperplane through
the origin, a generalized round cylinder
\(\Sphere^k(\sqrt{k})\times\R^{n-k}\), \(1\leq k\leq n\), or a product
\(\Gamma\times\R^{n-1}\), where \(\Gamma\subset\R^2\) is a non-round
Abresch--Langer self-shrinking curve?
\end{problem}
The compact counterpart in arbitrary codimension is known by
Zhao~\cite[Theorem~1.1]{Zhao2025}: if \(\Sigma^n\) is closed and
\(|\mathrm{II}|^2\leq 1+1/(10\pi(n+2))\), then \(\Sigma\) is the round sphere
\(\Sphere^n(\sqrt n)\).  The problem above asks whether an analogous one-sided
upper gap persists in the complete noncompact setting, where cylinders and
Abresch--Langer products have to be allowed.

\begin{problem}\label{prob:spherical-embedded-pinching}
Let \(M^n\subset\Sphere^{n+1}(1)\) be a closed embedded minimal hypersurface, and
let \(S_M=|A_M|^2\).  Does there exist a constant \(c(n)>0\) such that the
pointwise upper pinching
\[
  S_M<n+c(n)
\]
forces \(M\) to be one of the first-gap models, namely either the great sphere
with \(S_M\equiv0\), or a Clifford hypersurface
\[
  \Sphere^k\left(\sqrt{\frac{k}{n}}\right)\times
  \Sphere^{n-k}\left(\sqrt{\frac{n-k}{n}}\right)
  \subset\Sphere^{n+1}(1),
  \qquad 1\leq k\leq n-1,
\]
with \(S_M\equiv n\)?
\end{problem}
This is the compact spherical analogue of the upper-pinching questions for
complete self-shrinkers considered above.  The classical
Simons--Chern--do Carmo--Kobayashi theorem gives the conclusion under the
stronger assumption \(S_M\leq n\).  The problem asks whether, in the embedded
category, a uniform one-sided gap persists slightly above \(n\) without assuming
\(S_M\) to be constant.


\begin{thebibliography}{99}

\bibitem{AbreschLanger1986}
U.~Abresch and J.~Langer,
\emph{The normalized curve shortening flow and homothetic solutions},
J. Differential Geom. \textbf{23} (1986), no.~2, 175--196.

\bibitem{AlencarBessaSilvaNeto2025}
H.~Alencar, G.~P.~Bessa, and G.~Silva Neto,
\emph{Gap theorems for complete self-shrinkers of {$r$}-mean curvature flows},
J. Funct. Anal. \textbf{289} (2025), Paper No.~110920.

\bibitem{Brendle2013Survey}
S.~Brendle,
\emph{Minimal surfaces in {$S^3$}: a survey of recent results},
Bull. Math. Sci. \textbf{3} (2013), 133--171.

\bibitem{BrendleTsiamis2024}
S.~Brendle and R.~Tsiamis,
\emph{Eigenvalue estimates on shrinkers},
preprint, arXiv:2402.11803, 2024.

\bibitem{CaoLi2013}
H.-D.~Cao and H.~Z.~Li,
\emph{A gap theorem for self-shrinkers of the mean curvature flow in arbitrary codimension},
Calc. Var. Partial Differential Equations \textbf{46} (2013), no.~3--4, 879--889.

\bibitem{ChernDoCarmoKobayashi1970}
S.-S.~Chern, M.~do Carmo, and S.~Kobayashi,
\emph{Minimal submanifolds of a sphere with second fundamental form of constant length},
in \emph{Functional Analysis and Related Fields}, Springer, Berlin, 1970,
pp.~59--75.

\bibitem{ChengLiWei2022}
Q.-M.~Cheng, Z.~Li, and G.~X.~Wei,
\emph{Complete self-shrinkers with constant norm of the second fundamental form},
Math. Z. \textbf{300} (2022), no.~1, 995--1018.

\bibitem{ChengOgata2016}
Q.-M.~Cheng and S.~Ogata,
\emph{2-dimensional complete self-shrinkers in {$\mathbb R^3$}},
Math. Z. \textbf{284} (2016), no.~1--2, 537--542.

\bibitem{ChengPeng2015}
Q.-M.~Cheng and Y.~J.~Peng,
\emph{Complete self-shrinkers of the mean curvature flow},
Calc. Var. Partial Differential Equations \textbf{52} (2015), no.~3--4, 497--506.

\bibitem{ChengWei2015}
Q.-M.~Cheng and G.~X.~Wei,
\emph{A gap theorem of self-shrinkers},
Trans. Amer. Math. Soc. \textbf{367} (2015), no.~7, 4895--4915.

\bibitem{ChengWeiYamashiro2021}
Q.-M.~Cheng, G.~X.~Wei, and T.~Yamashiro,
\emph{Chern conjecture on minimal hypersurfaces},
preprint, arXiv:2104.14057, 2021.

\bibitem{ChengWeiYano2023}
Q.-M.~Cheng, G.~X.~Wei, and W.~Yano,
\emph{The second gap on complete self-shrinkers},
Proc. Amer. Math. Soc. \textbf{151} (2023), no.~1, 339--348; arXiv:2104.14059.

\bibitem{ChengZhou2013}
X.~Cheng and D.~T.~Zhou,
\emph{Volume estimate about shrinkers},
Proc. Amer. Math. Soc. \textbf{141} (2013), no.~2, 687--696.

\bibitem{ChoeSoret2009}
J.~Choe and M.~Soret,
\emph{First eigenvalue of symmetric minimal surfaces in \(\Sphere^3\)},
Indiana Univ. Math. J. \textbf{58} (2009), no.~1, 269--281.

\bibitem{ChoiWang1983}
H.~I.~Choi and A.~N.~Wang,
\emph{A first eigenvalue estimate for minimal hypersurfaces},
J. Differential Geom. \textbf{18} (1983), no.~3, 559--562.

\bibitem{ColdingMinicozzi2012Generic}
T.~H.~Colding and W.~P.~Minicozzi II,
\emph{Generic mean curvature flow I; generic singularities},
Ann.\ of Math. (2) \textbf{175} (2012), no.~2, 755--833.

\bibitem{Ding2018}
Q.~Ding,
\emph{A rigidity theorem on the second fundamental form for self-shrinkers},
Trans. Amer. Math. Soc. \textbf{370} (2018), no.~12, 8311--8329.

\bibitem{DingXin2011}
Q.~Ding and Y.~L.~Xin,
\emph{On Chern's problem for rigidity of minimal hypersurfaces in the spheres},
Adv. Math. \textbf{227} (2011), no.~1, 131--145.

\bibitem{DingXin2013}
Q.~Ding and Y.~L.~Xin,
\emph{Volume growth, eigenvalue and compactness for self-shrinkers},
Asian J. Math. \textbf{17} (2013), no.~3, 443--456.

\bibitem{DingXin2014}
Q.~Ding and Y.~L.~Xin,
\emph{The rigidity theorems of self-shrinkers},
Trans. Amer. Math. Soc. \textbf{366} (2014), no.~10, 5067--5085.

\bibitem{DuncanSireSpruck2024}
J.~A.~J.~Duncan, Y.~Sire, and J.~Spruck,
\emph{An improved eigenvalue estimate for embedded minimal hypersurfaces in the sphere},
Int. Math. Res. Not. IMRN \textbf{2024}, no.~18, 12556--12567.

\bibitem{GeLiuLuoYan2026}
J.~Q.~Ge, T.~Liu, K.~Y.~Luo, and W.~J.~Yan,
\emph{Rigidity of closed minimal hypersurfaces in {$\mathbb S^5$}},
preprint, arXiv:2606.29246, 2026.

\bibitem{GeTanYanZhang2025}
J.~Q.~Ge, H.~X.~Tan, W.~J.~Yan, and Y.~H.~Zhang,
\emph{Chern conjecture on minimal Willmore hypersurfaces with constant scalar curvature},
preprint, arXiv:2512.08342, 2025.

\bibitem{HeXuZhao2026}
C.~He, H.~W.~Xu, and E.~Zhao,
\emph{Classification of closed minimal hypersurfaces with constant scalar curvature in {$\mathbb S^5$}},
preprint, arXiv:2603.01181, 2026.

\bibitem{Huisken1990}
G.~Huisken,
\emph{Asymptotic behavior for singularities of the mean curvature flow},
J. Differential Geom. \textbf{31} (1990), no.~1, 285--299.

\bibitem{Huisken1993}
G.~Huisken,
\emph{Local and global behaviour of hypersurfaces moving by mean curvature},
in \emph{Differential geometry: partial differential equations on manifolds},
Proc. Sympos. Pure Math., vol.~54, Amer. Math. Soc., Providence, RI, 1993,
pp.~175--191.

\bibitem{JimenezChinchayZhou2026}
A.~Jim\'enez, C.~Tapia Chinchay, and D.~T.~Zhou,
\emph{A lower bound for the first eigenvalue of a minimal hypersurface in the sphere},
Rev. Mat. Iberoam. \textbf{42} (2026), no.~1, 261--278.

\bibitem{LeSesum2011}
N.~Q.~Le and N.~\v{S}e\v{s}um,
\emph{Blow-up rate of the mean curvature during the mean curvature flow and a gap theorem for self-shrinkers},
Comm. Anal. Geom. \textbf{19} (2011), no.~4, 633--659.

\bibitem{LeiXuXu2017JFA}
L.~Lei, H.~W.~Xu, and Z.~Y.~Xu,
\emph{On Chern's conjecture for minimal hypersurfaces and rigidity of self-shrinkers},
J. Funct. Anal. \textbf{273} (2017), no.~11, 3406--3425.

\bibitem{LeiXuXu2020}
L.~Lei, H.~W.~Xu, and Z.~Y.~Xu,
\emph{A new pinching theorem for complete self-shrinkers and its generalization},
Sci. China Math. \textbf{63} (2020), no.~6, 1139--1152.

\bibitem{LiWang2017}
H.~Z.~Li and X.~F.~Wang,
\emph{New characterizations of the Clifford torus as a Lagrangian self-shrinker},
J. Geom. Anal. \textbf{27} (2017), no.~2, 1393--1412.

\bibitem{LiZhao2026}
F.~G.~Li and Y.~H.~Zhao,
\emph{Gaussian-weighted curvature gaps for self-shrinkers},
preprint, arXiv:2606.14360, 2026.


\bibitem{Matinpour2025}
E.~Matinpour,
\emph{First eigenvalue and nodal domains of the drift Laplacian on symmetric self-shrinkers in {$\mathbb R^3$}},
arXiv:2509.25617, 2025.

\bibitem{Mantegazza2011}
C.~Mantegazza,
\emph{Lecture notes on mean curvature flow},
Progress in Mathematics, vol.~290, Birkh\"auser/Springer Basel AG, Basel, 2011.

\bibitem{PengTerng1983MathAnn}
C.-K.~Peng and C.-L.~Terng,
\emph{The scalar curvature of minimal hypersurfaces in spheres},
Math. Ann. \textbf{266} (1983), no.~1, 105--113.


\bibitem{SchoenYau1994}
R.~Schoen and S.-T.~Yau,
\emph{Lectures on differential geometry},
Conference Proceedings and Lecture Notes in Geometry and Topology, vol.~1,
International Press, Cambridge, MA, 1994.

\bibitem{Simons1968}
J.~Simons,
\emph{Minimal varieties in Riemannian manifolds},
Ann. of Math. (2) \textbf{88} (1968), no.~1, 62--105.

\bibitem{TangXieYan2014}
Z.~Z.~Tang, Y.~Q.~Xie, and W.~J.~Yan,
\emph{Isoparametric foliation and Yau conjecture on the first eigenvalue, II},
J. Funct. Anal. \textbf{266} (2014), no.~10, 6174--6199.

\bibitem{TangWeiYan2020}
Z.~Z.~Tang, D.~Y.~Wei, and W.~J.~Yan,
\emph{A sufficient condition for a hypersurface to be isoparametric},
Tohoku Math. J. (2) \textbf{72} (2020), no.~4, 493--505.

\bibitem{TangYan2013}
Z.~Z.~Tang and W.~J.~Yan,
\emph{Isoparametric foliation and Yau conjecture on the first eigenvalue},
J. Differential Geom. \textbf{94} (2013), no.~3, 521--540.

\bibitem{TangYan2023}
Z.~Z.~Tang and W.~J.~Yan,
\emph{On the Chern conjecture for isoparametric hypersurfaces},
Sci. China Math. \textbf{66} (2023), no.~1, 143--162.

\bibitem{YangCheng1998}
H.~C.~Yang and Q.-M.~Cheng,
\emph{Chern's conjecture on minimal hypersurfaces},
Math. Z. \textbf{227} (1998), no.~3, 377--390.

\bibitem{Yau1982}
S.-T.~Yau,
\emph{Problem section},
in \emph{Seminar on Differential Geometry},
Ann.\ of Math. Stud., vol.~102, Princeton Univ. Press, Princeton, NJ, 1982,
pp.~669--706.

\bibitem{Zhao2025}
Y.~H.~Zhao,
\emph{A gap theorem on closed self-shrinkers of mean curvature flow},
preprint, arXiv:2503.00505, 2025.

\bibitem{Zhao2026FirstEigenvalue}
Y.~H.~Zhao,
\emph{The first eigenvalue of embedded minimal hypersurfaces in the unit sphere},
preprint, arXiv:2603.20890, 2026.

\end{thebibliography}
\end{document}